\renewcommand{\d}{\partial}
\newcommand{\dbar}{\overline{\partial}}
\newcommand{\ddbar}{\sqrt{-1}\d\overline{\d}}
\newcommand{\nddbar}{\frac{\sqrt{-1}}{2\pi}\d\overline{\d}}
\newtheorem{thm}{Theorem}
\newtheorem{prop}[thm]{Proposition}
\newtheorem{lem}[thm]{Lemma}
\newtheorem{cor}[thm]{Corollary}
\theoremstyle{definition}
\newtheorem{defn}[thm]{Definition}
\newtheorem{rem}[thm]{Remark}
\renewcommand{\[}{\begin{equation}}
\renewcommand{\]}{\end{equation}}
\newcommand{\be}{\beta}
\newcommand{\la}{\lambda}
\newcommand{\La}{\Lambda}
\newcommand{\vep}{\varepsilon}
\newcommand{\NN}{\mathbb{N}}
\newcommand{\QQ}{\mathbb{Q}}
\newcommand{\RR}{\mathbb{R}}
\newcommand{\CC}{\mathbb{C}}
\newcommand{\PP}{\mathbb{P}}
\newcommand{\sH}{\mathcal{H}}
\newcommand{\sM}{\mathcal{M}}
\newcommand{\fg}{\mathfrak{g}}
\newcommand{\Nu}{\mathcal{V}}
\newcommand{\sW}{\mathcal{W}}
\newcommand{\sG}{\mathcal{G}}
\newcommand{\sL}{\mathcal{L}}
\newcommand{\sA}{\mathcal{A}}
\newcommand{\DF}{\mathrm{DF}}
\newcommand{\sJ}{\mathcal{J}}
\newcommand{\dvols}{\frac{\omega_s^n}{n!}}
\newcommand{\dvj}{\frac{\omega_i^n}{n!}}
\newcommand{\dvo}{\frac{\omega_0^n}{n!}}
\newcommand{\dvos}{\frac{\omega_{0,s}^n}{n!}}
\newcommand{\dvok}{\frac{(k\omega_0)^n}{n!}}
\newcommand{\dvvi}{\frac{\omega_i^n}{V_in!}}
\newcommand{\dvjs}{\frac{\omega_{i,s}^n}{n!}}
\newcommand{\dvvjs}{\frac{\omega_{i,s}^n}{V_in!}}
\newcommand{\cFut}{\mathrm{Fut}_c}
\newcommand{\Fut}{\mathrm{Fut}}
\newcommand{\ctFut}{\mathrm{Fut}_{c,(1-t)\psi}}
\title{On coupled constant scalar curvature K\"ahler metrics}
\author[V. V. Datar]{Ved V. Datar}
\author[V. P. Pingali]{Vamsi Pritham Pingali}
\address{Department of Mathematics, Indian Institute of Science, Bangalore, India - 560012}
\email{vvdatar@iisc.ac.in, vamsipingali@iisc.ac.in}
\begin{document}

\maketitle
\begin{abstract} 
We provide a moment map interpretation for the coupled K\"ahler-Einstein equations introduced in \cite{Jakobintro}, and in the process introduce a more general system of equations, which we call coupled cscK equations. A differentio-geometric formulation of the corresponding Futaki invariant is obtained and a notion of K-polystability is defined for this new system. Finally, motivated by a result of Sz\'ekelyhidi, we prove that if there is a solution to our equations, then small $K$-polystable perturbations of the underlying complex structure and polarizations also admit coupled cscK metrics. 
\end{abstract}

\section{Introduction}

\indent Our aim in this paper is to study a set of metrics satisfying some coupled equations on a K\"ahler manifold, that generalise constant scalar curvature K\"ahler (cscK) metrics, and the coupled K\"ahler-Einstein metrics studied in \cite{Jakobintro,Jakobtorpap, Jakobhom, Futakicoupled, cKE, Taka}. Throughout the paper we fix a polarized tuple $(M,(L_i))$, i.e., \ an $n$-dimensional K\"ahler manifold $M$ with ample line bundles $L_0,L_1,\cdots,L_m$, and we denote the line bundle  $\displaystyle \otimes_{i=0}^mL_i$ by $L$. We are interested in metrics $\omega_i\in  2\pi c_1(L_i)$ for $i=0,\cdots,m$ that satisfy
\begin{align}\label{coupled csck}
\frac{\omega_0^n}{V_0} &= \cdots = \frac{\omega_m^n}{V_m}\\
S_{\omega_0} &= \mathrm{tr}_{\omega_0}\omega + \hat S,\nonumber
\end{align}
where $S_{\omega_0}$ is the scalar curvature of $\omega_0$, $V_i = (2\pi L_i)^n/n!$, $\omega = \omega_0+\omega_1+\cdots+\omega_m$ and $\hat S$ is a computable constant, namely, $$\hat S = n\frac{(-K_M - L)\cdot L_0^{n-1}}{L_0^n}.$$ In particular, if $M$ is Fano and $L = -K_M$, then $\hat S = 0$, and it is easy to show that the above system reduces to the coupled K\"ahler Einstein system, viz., 
\begin{gather}
\mathrm{Ric}(\omega_0)=\mathrm{Ric}(\omega_1)=\ldots =\mathrm{Ric}(\omega_m) = \omega.
\label{coupledKEequation}
\end{gather}
 In analogy to the relationship between the K\"ahler-Einstein problem and the cscK problem,  we refer to  $(\omega_0,\omega_1,\cdots,\omega_m)$ solving \eqref{coupled csck} as  {\em coupled cscK metrics}, and we say that $(M,(L_i))$ admits coupled cscK metrics. The reader should however be forewarned that coupled cscK metrics will in general not have constant scalar curvatures (unless of course $m=0$).

Our main result is that coupled cscK metrics have a moment map interpretation. To describe the setting, we fix a K\"ahler form $\omega_0 \in 2\pi c_1(L_0)$ and Hermitian metrics $h_1,\cdots,h_m$ on the underlying smooth bundles $L_1,\cdots,L_m$ respectively, and consider a  subspace $\sM \subset \sJ\times\sA_1\times\cdots\sA_m$ of ``integrable tuples" (cf.\ section 3 for details), where $\sJ$ is the space of almost complex structures on $M$ compatible with $\omega_0$ and taming it, and $\sA_i$ is the space of unitary connections on $L_i$. There is a natural almost complex structure $\mathbf{I}$ and a compatible symplectic form $\Omega$ on $\sM$ giving it a formal K\"ahler structure. The theorem alluded to above is the following. 

\begin{thm}\label{thm:momentmapthm}
There exists a group $\sG$ with a Hamiltonian action on $(\sM,\Omega)$ such that if $\mu:\sM\rightarrow Lie(\sG)^*$ is the moment map for the action, then $\mu(J,A_1,\cdots,A_m) = 0$ if and only if  $\omega_i = \sqrt{-1}F_{A_i}$ is a $(1,1)$ form for $i=1,\cdots,m$, and $(\omega_0,\omega_1,\cdots,\omega_m)$ are coupled cscK metrics.

\end{thm}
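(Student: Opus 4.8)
The plan is to adapt the Fujiki--Donaldson description of scalar curvature as a moment map, enlarging the symmetry group so that it acts simultaneously on the complex structure and on the connections, thereby producing the coupling. The first step is to pin down $\sG$ and its Lie algebra. The natural candidate is the group of bundle automorphisms of $(L_1,\dots,L_m)$ covering Hamiltonian diffeomorphisms of $(M,\omega_0)$, an extension
\[
1 \longrightarrow \prod_{i=1}^m \sG_i \longrightarrow \sG \longrightarrow \mathrm{Ham}(M,\omega_0) \longrightarrow 1,
\]
where $\sG_i = C^\infty(M,U(1))$ is the unitary gauge group of $L_i$. I would identify $\mathrm{Lie}(\sG)$ with tuples $(h;f_1,\dots,f_m)$ of real functions: $h$ generates the Hamiltonian field $X_h$ on $(M,\omega_0)$, and each $f_i$ generates a gauge transformation of $L_i$; the bracket combines the Poisson bracket on the $h$-slot with the infinitesimal action of $X_h$ on the $f_i$. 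On $\sM$, the $\mathrm{Ham}$-part acts by pullback on $\sJ$ and by its lift on each $\sA_i$, while $\sG_i$ acts by $A_i \mapsto A_i + \sqrt{-1}\,df_i$.

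The second step is to fix the symplectic form $\Omega = \Omega_\sJ + \sum_{i=1}^m \Omega_{\sA_i}$, with $\Omega_\sJ$ the Fujiki form on $\sJ$ and each $\Omega_{\sA_i}$ a curvature-weighted Atiyah--Bott pairing $\Omega_{\sA_i}(a,b) \propto \int_M a\wedge b\wedge \tfrac{(\sqrt{-1}F_{A_i})^{n-1}}{(n-1)!}$ on $a,b\in\Omega^1(M,\sqrt{-1}\RR)$, the weight chosen so that the gauge directions see the Monge--Amp\`ere densities $\tfrac{\omega_i^n}{n!}$. Since the $\mathrm{Ham}$-part of $\sG$ acts on every factor, the moment map decomposes as $\langle\mu,(h;f_1,\dots,f_m)\rangle = \langle\mu_\sJ,h\rangle + \sum_i\langle\mu_{\sA_i},(h,f_i)\rangle$. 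The Fujiki--Donaldson computation gives the $\sJ$-contribution $\langle\mu_\sJ,h\rangle = \int_M S_{\omega_0}\,h\,\tfrac{\omega_0^n}{n!}$ up to an additive constant; a direct integration by parts (using Bianchi, $dF_{A_i}=0$) identifies the $f_i$-component of $\mu_{\sA_i}$ with the mass-zero density $\tfrac{\omega_i^n}{n!}-\tfrac{V_i}{V_0}\tfrac{\omega_0^n}{n!}$; and the lifted-Hamiltonian component is expected to contribute, to the $h$-slot, a trace density assembling over $i$ into $\mathrm{tr}_{\omega_0}\omega$, this last computation being the crux discussed below.

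The third step is to unwind $\mu = 0$. Vanishing of each gauge component gives $\tfrac{\omega_i^n}{V_i} = \tfrac{\omega_0^n}{V_0}$, i.e.\ the first line of \eqref{coupled csck}; here $V_i$ enters precisely as the (cohomological, hence $\sM$-constant) total mass $\int_M \tfrac{\omega_i^n}{n!}$, which also fixes the additive constant forced by the fact that constant gauge parameters act trivially. Vanishing of the $h$-component, after assembling $S_{\omega_0}$ from $\mu_\sJ$ with $\sum_i\mathrm{tr}_{\omega_0}\omega_i = \mathrm{tr}_{\omega_0}(\omega-\omega_0)$ from the connections, reads $\int_M\big(S_{\omega_0}-\mathrm{tr}_{\omega_0}\omega-c\big)h\,\tfrac{\omega_0^n}{n!}=0$ for all normalized $h$, i.e.\ $S_{\omega_0}=\mathrm{tr}_{\omega_0}\omega+\hat S$; the constant is pinned by integrating against $\tfrac{\omega_0^n}{n!}$ and evaluating $\int S_{\omega_0}\tfrac{\omega_0^n}{n!}$ and $\int\mathrm{tr}_{\omega_0}\omega\,\tfrac{\omega_0^n}{n!}$ cohomologically as $n\,(-K_M)\cdot L_0^{n-1}$ and $n\,L\cdot L_0^{n-1}$ (normalized by $L_0^n$), reproducing $\hat S = n\tfrac{(-K_M-L)\cdot L_0^{n-1}}{L_0^n}$. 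Finally, the $(1,1)$-type of each $\omega_i=\sqrt{-1}F_{A_i}$ is the holomorphicity condition $F_{A_i}^{0,2}=0$ carried by the integrable locus $\sM$: together with integrability of $J$ and unitarity of $A_i$ it forces $\sqrt{-1}F_{A_i}$ to be a real $(1,1)$ form.

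The main obstacle is concentrated entirely in the connection factors. One must first check that the curvature-weighted forms $\Omega_{\sA_i}$ are closed (which follows from Bianchi and the cohomological invariance of $\int_M(\sqrt{-1}F_{A_i})^n$) and that the enlarged $\sG$-action is genuinely Hamiltonian for $\Omega$, including the equivariance and cocycle conditions for the extension; this is also where the normalizations $V_i$ and the constant $\hat S$ are rigidly determined. The genuinely new and delicate computation, however, is the mixed term: the moment map for the \emph{lifted} Hamiltonian action on $\sA_i$. I would write the lift of $X_h$ explicitly in terms of its prequantum potential relative to $F_{A_i}$, so that the induced variation of the connection is $\iota_{X_h}F_{A_i}$ plus an exact term; evaluating $\iota_{X_h}\Omega_{\sA_i}$ and integrating by parts should then isolate $\mathrm{tr}_{\omega_0}\omega_i$ rather than the wrong trace $\mathrm{tr}_{\omega_i}\omega_0$. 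Keeping the signs consistent and ensuring that the Hamiltonian and gauge slots interlock so the extension bracket closes is the subtle point on which the entire coupling rests, and I expect it to be the crux of the proof.
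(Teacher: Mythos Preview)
Your proposal is correct and follows essentially the same architecture as the paper: the same extended gauge group $\tilde{\sG}$ sitting in the exact sequence $1\to\prod_i\sG_i\to\tilde{\sG}\to\mathrm{Ham}(M,\omega_0)\to 1$, the same product symplectic form $\Omega=\Omega_{\sJ}+\sum_i\Omega_{\sA_i}$ with curvature-weighted Atiyah--Bott pairings, and the same identification of the zero locus. You have also correctly located the crux in the mixed Hamiltonian-connection term.

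The one notable difference in execution is how that crux is handled. You propose to compute the moment map for the lifted $\mathrm{Ham}$-action on each $\sA_i$ directly by writing the lift in terms of a prequantum potential and integrating by parts to extract $\mathrm{tr}_{\omega_0}\omega_i$. The paper instead parametrizes $\mathrm{Lie}(\tilde{\sG})$ at a point $(J,A)$ by the $A$-dependent tuple $(H_{\zeta,0},H_{\zeta,A_1},\dots,H_{\zeta,A_m})$ arising from the horizontal-plus-vertical decomposition of the lifted vector field, writes down the full moment map
\[
\mu_{(J,A)}(\xi)=\int_M\mathrm{tr}\Big(H_{\zeta,A}\big(\lambda\tfrac{\omega_A^n}{n!}-\tfrac{\omega_0^n}{n!}\big)\Big)+\int_M H_{\zeta,0}\Big(S_{\omega_0}\tfrac{\omega_0^n}{n!}-\mathrm{tr}(\omega_A)\wedge\tfrac{\omega_0^{n-1}}{(n-1)!}\Big),
\]
and verifies the moment-map identity by showing (Lemma~\ref{variation}, borrowed from \cite{Mariothesis}) that the variations of the two ``cross'' terms $-\int\mathrm{tr}(H_{\zeta,A})\,\omega_0^n$ and $-\int H_{\zeta,0}\,\mathrm{tr}(\omega_A)\wedge\omega_0^{n-1}$ cancel along any path of connections. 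This cancellation identity is exactly the integration-by-parts step you anticipate, but packaged as a separate lemma rather than as a direct moment-map computation for the lifted action; your route and the paper's are two sides of the same coin.
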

In the standard moment map picture of Fujiki \cite{Fujiki} and Donaldson \cite{Donaldmom} for the cscK problem, the group of Hamiltonian symplectomorphisms play the role of the gauge group. Inspired by \cite{Mariothesis}, we define our gauge group $\sG$ as a subgroup of the group of unitary automorphisms of the vector bundle $(E=\oplus_{i=1}^m L_i, \oplus_i h_i)$ covering Hamiltonian symplectomorphisms of $(M,\omega_0)$. Details are presented in Section \ref{sec:moment}.

\begin{rem}\label{whythisequation}
Naively, one might want to define a coupled cscK system by simply tracing each of the equations in \eqref{coupledKEequation}. However, unlike \eqref{coupled csck}, to our knowledge such a system does not appear to have a natural moment map interpretation.
\end{rem}

Analogous to the beautiful perturbation results of Br\"onnle \cite{Bron} and Sz\'ekelyhidi \cite{Gabor}, using some techniques in \cite{Mariotipler}, we apply the moment map picture to obtain a deformation result for coupled cscK metrics.
 
\begin{thm}\label{thm:perturb}
Suppose $(M,J,(L_i))$ admits coupled cscK metrics. Then a sufficiently small deformation $(M,J',(L_i'))$ of the complex structure of $(M,J,(L_i))$  admits coupled cscK metrics if it is $K$-polystable. 
\end{thm}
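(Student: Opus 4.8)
The plan is to follow the general strategy of Sz\'ekelyhidi's perturbation theorem, adapted to the coupled setting via the moment map picture of Theorem~\ref{thm:momentmapthm}. Since a coupled cscK metric corresponds to a zero of the moment map $\mu$, the problem of finding coupled cscK metrics on a nearby complex structure $J'$ becomes the problem of finding a zero of $\mu$ in a neighborhood of the $\sG$-orbit of the original solution. The first step is to set up the deformation theory infinitesimally. I would linearize the defining equations at the given solution $(J,A_1,\cdots,A_m)\in\sM$ and identify the relevant elliptic complex, whose cohomology controls infinitesimal deformations of the integrable tuple. The kernel of the linearized operator decomposes into directions tangent to the $\sG$-orbit, directions coming from the stabilizer (i.e.\ the holomorphic vector fields / reductive automorphism group of the solution), and genuine deformation directions parametrizing nearby complex structures and polarizations. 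Since the moment map is $\sG$-equivariant, the problem descends to a finite-dimensional reduction.

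The second step is the finite-dimensional reduction itself. Let $G\subset\sG$ be the stabilizer of the solution, a compact group whose complexification $G^{\CC}$ encodes the reductive part of the automorphism group. Near the solution, the space of integrable deformations modulo gauge is modeled on a finite-dimensional $G$-invariant space $H$ (a slice for the $\sG$-action, cut out using the implicit function theorem in appropriate Sobolev/H\"older spaces). The essential point is that solving the moment map equation on a nearby deformation $v\in H$ reduces to the finite-dimensional moment map equation $\mu_H(v)=0$ for the induced Hamiltonian $G$-action on $H$. This is the Kuranishi-type reduction: one first solves the infinite-dimensional part of the equation transverse to the orbit directions (an application of the inverse function theorem, using ellipticity and the mapping properties of the linearized operator), leaving only the finite-dimensional obstruction.

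The third step is to connect the finite-dimensional moment map condition to K-polystability. By the Kempf-Ness theorem for the $G^{\CC}$-action on the slice $H$, a point $v$ lies in a $G^{\CC}$-orbit containing a zero of $\mu_H$ if and only if $v$ is polystable for the $G^{\CC}$-action in the GIT sense. The content of the argument is to show that this finite-dimensional GIT polystability is implied by the $K$-polystability hypothesis on $(M,J',(L_i'))$. This requires identifying the relevant $\CC^*$-degenerations arising from one-parameter subgroups of $G^{\CC}$ with test configurations, and matching the numerical weight from the finite-dimensional GIT (the derivative of the Kempf-Ness functional, i.e.\ the Hilbert-Mumford weight) with the coupled Futaki invariant of the corresponding test configuration. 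Here one must use the differentio-geometric formula for the coupled Futaki invariant established earlier in the paper, together with the fact that the Futaki invariant of the product configurations induced by $G^{\CC}$ is computed by the finite-dimensional weight.

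The main obstacle will be the third step: establishing that $K$-polystability of the deformed tuple implies finite-dimensional GIT polystability of the corresponding point in the slice. The difficulty is twofold. First, one must show that every destabilizing one-parameter subgroup of $G^{\CC}$ genuinely arises from (or is approximated by) a test configuration whose coupled Futaki invariant is controlled by $K$-polystability — in general, the automorphism group only sees product-type degenerations, so one needs the full strength of the coupled $K$-polystability notion, including the vanishing of the Futaki invariant precisely on the reductive automorphisms. Second, the matching of the two weights requires care because the coupled setting involves the auxiliary line bundles $L_1,\cdots,L_m$ and the coupling term $\mathrm{tr}_{\omega_0}\omega$, so the Futaki invariant is genuinely a new invariant; one must verify that its behavior under product configurations is compatible with the symplectic/Kempf-Ness weight coming from $\Omega$ on $\sM$. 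Once the weight identification is in place, the Kempf-Ness existence criterion closes the argument, producing a zero of $\mu_H$ and hence, by unwinding the reduction, a coupled cscK metric on $(M,J',(L_i'))$.
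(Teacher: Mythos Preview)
Your first two steps---the elliptic complex, the Kuranishi slice, the finite-dimensional reduction to the action of the stabilizer $K$ on a slice $H\subset\tilde H^1$---match the paper's setup essentially verbatim. The divergence is in your third step, and it is a genuine logical gap rather than a technical oversight.

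You propose to prove the implication ``$K$-polystable $\Rightarrow$ $K^\CC$-polystable in the finite-dimensional GIT on $H$'' by matching the Hilbert--Mumford weight for a one-parameter subgroup of $K^\CC$ with the coupled Futaki invariant of the induced test configuration. The paper does \emph{not} establish this implication, and does not need to. The weight-matching you identify as the main obstacle is indeed delicate (the finite-dimensional symplectic form is only $\Phi^*\Omega$ at the origin, and there is no reason the GIT weight should equal the Donaldson--Futaki invariant on the nose), and the paper's argument sidesteps it entirely.

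Instead, following a refinement due to Dervan and Inoue of Sz\'ekelyhidi's original argument, the paper argues by cases on $v'\in B$ with $\Phi(v')\sim(J',A')$. If $v'$ is $K^\CC$-polystable, Kempf--Ness plus a perturbation lemma of Donaldson produce a zero of $\mu$ in the complexified orbit, and one is done. If $v'$ is \emph{not} $K^\CC$-polystable, Hilbert--Mumford supplies a one-parameter subgroup $\rho$ whose limit $v_0=\lim_{\lambda\to 0}\rho(\lambda)v'$ satisfies $\nu(v_0)=0$ (either $v_0=0$, or $v_0$ lies in the unique closed orbit in the closure). By the earlier steps, the tuple $(M,J_0,(L_{i,0}))$ corresponding to $v_0$ (after perturbation) admits coupled cscK metrics, and hence its coupled Futaki invariant vanishes. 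The one-parameter subgroup $\rho$ is then used to build a test configuration for $(M,J',(L_i'))$ with central fibre $(M,J_0,(L_{i,0}))$ and zero Donaldson--Futaki invariant. Now $K$-polystability is invoked only through its \emph{equality clause}: zero invariant forces the test configuration to be a product, so $(M,J',(L_i'))\cong(M,J_0,(L_{i,0}))$ already carries coupled cscK metrics.

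Thus $K$-polystability is used not to certify finite-dimensional polystability of $v'$, but to handle precisely the case where that fails, by forcing the degenerated object to be isomorphic to the original. Your route, if it could be made to work, would give a conceptually cleaner proof; but it requires a weight identification that is neither available nor needed.
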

We expect that the converse, namely that existence of coupled cscK metrics implies $K$-polystability should also be true, but we do not get into these considerations in this paper. 
We now provide a brief outline of our paper. In section \ref{sec:moment} we provide a moment map interpretation of the system of equations (\ref{coupled csck}). In section \ref{sec:Futaki} we use our moment map interpretation to give a definition of a coupled Futaki invariant on a normal variety, which vanishes precisely when the tuple admits a coupled cscK metric. We then define the corresponding notion of $K$-polystability, and show that when $L=-K_M$ our definition coincides with the algebro-geometric one in \cite{Jakobintro}. As an aside, we also define a twisted coupled Futaki invariant.  Section \ref{sec:perturbsection} contains the proof of Theorem \ref{thm:perturb}, following closely the proofs in \cite{Gabor, Bron, Mariotipler}.\\

\emph{Acknowledgements} : The authors would like to thank Ruadhai Dervan for some clarifications on his work on twisted cscK metrics, and many useful comments on the first draft of the paper. The second author (Pingali) is  partially supported by an SERB grant : ECR/2016/001356. He is also grateful to the Infosys foundation for the Infosys Young Investigator Award. This work is also partially supported by grant F.510/25/CAS-II/2018(SAP-I) from UGC (Govt. of India).
\section{A moment map interpretation for the coupled cscK equation}\label{sec:moment}
\indent In this section we prove theorem \ref{thm:momentmapthm}. As in the introduction, for $i=0,\cdots,m$, let $h_i$ be metrics on $L_i$ and $\mathcal{A}_{i}$ be the space of $h_i$-unitary connections on $L_i$. Let $-\sqrt{-1}\omega_0$ be the curvature of a fixed connection $A_0 \in\mathcal{A}_0$ on $L_0$ such that $\omega_0$ defines a K\"ahler form with respect to the given complex structure on $M$. Akin to \cite{Mariothesis}, let $\mathcal{G}_i$ be the gauge group of unitary gauge transformations of $(L_i,h_i)$ covering the identity and $\tilde{\mathcal{G}}$ be the group of gauge transformations of $(E=\oplus_{i=1}^mL_i,\oplus_{i=1}^m h_i)$ of the form $g_1\oplus g_2\ldots$ covering Hamiltonian symplectomorphisms of $(M,\omega_0)$. If $\mathcal{H}$ is the group of Hamiltonian symplectomorphisms of $(M,\omega_0)$, then there is a short exact sequence 
$$0\rightarrow \mathcal{G}_1\times \mathcal{G}_2\ldots \rightarrow \tilde{\mathcal{G}} \rightarrow \mathcal{H} \rightarrow 0.$$
Indeed, the last map is onto because of the existence of a horizontal lift of a Hamiltonian vector field.\\
\indent Next, let $\mathcal{N}=\mathcal{J}\times \mathcal{A}_1 \times \mathcal{A}_2\ldots \mathcal{A}_m$ where $\mathcal{J}$ is the space of almost complex structures compatible with and taming $\omega_0$. For ease of notation, we denote an element $(J,A_1,\cdots,A_m)$ simply as the pair $(J,A)$. Note that $A$ can be thought of as a unitary connection on $(E,\oplus_{i=1}^m h_i)$. Let $\mathcal{M}\subset \mathcal{N}$ be the subset consisting of pairs $(J,A)$, such that $J$ is integrable and $\sqrt{-1}F_{A_i}$ is a positive $(1,1)$ form for each $i$. (More accurately, we only deal with the open set consisting of the smooth part of the subset $\mathcal{M}$.) Note that the tangent space $\mathcal{T}_{A_i}\mathcal{A}_i$ is given by $\Lambda^1(M,i\RR)$, which we identify with $\Lambda^1(M,\RR)$. On the other hand, the tangent space of $\mathcal{J}$ is given by $$\mathcal{T}_J\mathcal{J} = \{S\in \mathrm{End}(TM)~|~ SJ+JS = 0,~\omega_0(SX,JY) +\omega_0(JX,SY) = 0 \},$$ and so the tangent space $\mathcal{T}_{(J,A)}\mathcal{N}$ to $\mathcal{N}$ at a point $(J,A)$ is given by pairs $(S,a)$, where  $S\in \mathcal{T}_J\mathcal{J}$ and $a = (a_1,\cdots,a_m)$ with $a_i\in \Lambda^1(M,\RR)$. The tangent space of $\mathcal{M}$ at an integrable point $(J,A)$ is a subspace $T_{(J,A)}\mathcal{M}$ of $T_{(J,A)}\mathcal{N}$ consisting of infinitesimally integrable pairs. There is a natural almost complex structure $\mathbf{I}$ induced on $\mathcal{T}_{(A, J)}\mathcal{N}$, namely $$\mathbf{I}_{(A, J)}(S,a) = (JS,J^*a),$$ where $J^*$ is the dual action $J^*a(v) = a(Jv).$ This complex structure is integrable \cite{Mariothesis}. \\
\indent Taking cue from \cite{fine, vbma} we define a 2-form on $\mathcal{N}$ as follows.
$$\Omega_{(J,A)}((S,a),(T,b)) = -\sum_{i=1}^m \frac{V_0}{V_i}\int_M a_i \wedge b_i\wedge \frac{\omega_{A_i}^{n-1}}{(n-1)!}+\int_M tr(JST) \dvo,$$
where $\omega_{A_i} = \sqrt{-1}F_{A_i}$, and $V_i = (2\pi)^n\frac{L_i^n}{n!} \ \forall \ i\geq 0$. 
\begin{lem}
$\Omega$ is a symplectic form on $\sM$ compatible with $\mathbf{I}$.
\end{lem}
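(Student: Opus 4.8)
The plan is to decompose $\Omega = \Omega^{\mathcal{J}} + \Omega^{\mathcal{A}}$ into its ``complex-structure'' part $\Omega^{\mathcal{J}}_{(J,A)}((S,a),(T,b)) = \int_M \mathrm{tr}(JST)\,\dvo$ and its ``connection'' part $\Omega^{\mathcal{A}}_{(J,A)}((S,a),(T,b)) = -\sum_i \frac{V_0}{V_i}\int_M a_i\wedge b_i\wedge\frac{\omega_{A_i}^{n-1}}{(n-1)!}$, and to treat the two separately: the former is pulled back from $\mathcal{J}$ and the latter from $\prod_i\mathcal{A}_i$ under the projections of $\mathcal{N}$, so there are no cross terms. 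Antisymmetry is immediate in both pieces, for the connection part from $a_i\wedge b_i = -b_i\wedge a_i$, and for the complex part from $SJ+JS=0$, which forces $\mathrm{tr}(JTS) = -\mathrm{tr}(JST)$ by cyclicity of the trace. It then remains to verify that $\Omega$ is closed, that it is $\mathbf{I}$-invariant, and that $g(\cdot,\cdot) := \Omega(\cdot,\mathbf{I}\cdot)$ is positive definite; non-degeneracy of $\Omega$ will follow formally from the last point since $g(X,X) = \Omega(X,\mathbf{I}X)$.

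For closedness I would argue on $\mathcal{N}$ and then restrict, as $d$ commutes with restriction to $\sM$. The complex-structure part is the Fujiki--Donaldson form, whose closedness is classical (cf.\ \cite{Fujiki, Donaldmom}). For the connection part, which lives on the affine space $\prod_i\mathcal{A}_i$, I would evaluate $d\Omega^{\mathcal{A}}$ on constant vector fields $(a),(b),(c)$, so that the Lie-bracket terms in Cartan's formula drop out and only the three directional derivatives survive. Using $\delta_c\omega_{A_i}=\sqrt{-1}\,dc_i$ one finds
\[
\delta_c\Omega^{\mathcal{A}}(a,b) = -\sqrt{-1}\sum_i\frac{V_0}{V_i}\int_M a_i\wedge b_i\wedge\frac{\omega_{A_i}^{n-2}}{(n-2)!}\wedge dc_i.
\]
Integrating by parts (using $d\omega_{A_i}=0$) and summing the three cyclic terms collapses the bracket to a total derivative $d(a_i\wedge b_i\wedge c_i)$, and a second application of Stokes (again with $d\omega_{A_i}=0$) kills it. Hence $d\Omega^{\mathcal{A}}=0$, so $d\Omega=0$ on $\mathcal{N}$ and a fortiori on $\sM$; this is the computation carried out in the abelian setting of \cite{fine, vbma}.

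For $\mathbf{I}$-compatibility I would compute directly. In the complex part $J(JS)(JT) = JST$, since $J^2=-I$ and $SJ=-JS$, so $\Omega^{\mathcal{J}}(\mathbf{I}\cdot,\mathbf{I}\cdot)=\Omega^{\mathcal{J}}$; moreover $g^{\mathcal{J}}(S,T) = \int_M \mathrm{tr}(JSJT)\,\dvo = \int_M\mathrm{tr}(ST)\,\dvo$ (using $JSJ=S$), which is symmetric and, because the second defining condition of $\mathcal{T}_J\mathcal{J}$ makes $S$ symmetric with respect to $g_0=\omega_0(\cdot,J\cdot)$, equals $\int_M|S|^2_{g_0}\,\dvo\ge0$ with equality iff $S=0$. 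In the connection part the crucial point is that on $\sM$ the curvature $\omega_{A_i}$ has type $(1,1)$, so wedging with $\omega_{A_i}^{n-1}$ annihilates all but the $(1,1)$-component of a product of two $1$-forms; writing $a=\alpha+\bar\alpha$ gives $J^*a = \sqrt{-1}\,\alpha - \sqrt{-1}\,\bar\alpha$, whence the $(1,1)$-parts of $J^*a\wedge J^*b$ and $a\wedge b$ coincide and $\Omega^{\mathcal{A}}(\mathbf{I}\cdot,\mathbf{I}\cdot)=\Omega^{\mathcal{A}}$. Symmetry of $g$ is then automatic from $\mathbf{I}$-invariance together with $\mathbf{I}^2=-1$, and positivity of $g^{\mathcal{A}}$ reduces to the pointwise Kähler identity
\[
-a\wedge J^*a\wedge\frac{\omega_{A_i}^{n-1}}{(n-1)!} = 2\sqrt{-1}\,\alpha\wedge\bar\alpha\wedge\frac{\omega_{A_i}^{n-1}}{(n-1)!},
\]
a nonnegative multiple of $\frac{\omega_{A_i}^n}{n!}$ vanishing only when $a=0$; the prefactors $\frac{V_0}{V_i}>0$ preserve the sign.

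The routine but most delicate step is the closedness computation for $\Omega^{\mathcal{A}}$: one must track signs through the two integrations by parts and confirm that the cyclic sum of directional derivatives genuinely reassembles into the exact form $d(a_i\wedge b_i\wedge c_i)$. I expect the only place where the restriction to $\sM$ is truly needed, namely integrability of $J$ and the type $(1,1)$ and positivity of $\omega_{A_i}$, is in establishing $\mathbf{I}$-compatibility and the positivity of $g$ in the connection directions; antisymmetry and closedness already hold on all of $\mathcal{N}$.
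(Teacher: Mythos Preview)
Your proposal is correct and follows essentially the same approach as the paper: both decompose $\Omega$ into its $\mathcal{J}$-part and its $\mathcal{A}$-part via the projections, cite Fujiki--Donaldson for closedness of the former, handle closedness of the latter by the Cartan/Stokes computation (the paper simply cites \cite{fine, vbma} for this, whereas you spell it out), and verify $\mathbf{I}$-compatibility and positivity of $\Omega(\cdot,\mathbf{I}\cdot)$ directly using $SJ=-JS$ and the $(1,1)$-type of $\omega_{A_i}$ on $\sM$. One minor slip: with the paper's conventions ($A(t)=A+\sqrt{-1}bt$, $\omega_A=\sqrt{-1}F_A$) the variation is $\delta_c\omega_{A_i}=-dc_i$ rather than $\sqrt{-1}\,dc_i$, but this constant is irrelevant to the closedness argument.
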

\begin{proof}Firstly note that for each $i$, $\omega_{A_i}(\cdot,J\cdot)$ defines a Riemannian metric. 
Non-degeneracy then follows from the following observation.
\begin{align*}
\Omega_{(J,A)}((S,a),\mathbf{I}(S,a)) &= -\displaystyle\sum_{i=1}^m\frac{V_0}{V_i}\int_M a_i\wedge J^*a_i \wedge \frac{\omega_{A_i}^{n-1}}{(n-1)!} + \int_M\mathrm{Tr}([JS]^2)\dvo\\
&=\sum_{i=1}^m \frac{V_0}{V_i}\int_M|a_i|^2_{g_j}\frac{\omega_{A_i}^n}{n!} + \int_M\mathrm{Tr}(S^2)\dvo,\\
&>0,
\end{align*}
unless $(S,a)$ is the zero tangent vector. To show that $\Omega$ is closed, we first observe  that $\Omega$ is of the form $\displaystyle \pi_{0}^{*} \tilde{\Omega}+ \sum_{i=1}^{m} \pi_{i}^{*} \Omega_{i} $ where $$\Omega_i (a,b) = \displaystyle \int_M \frac{V_0}{V_i}a\wedge b \wedge \frac{\omega_{A_i}^{n-1}}{(n-1)!},~\tilde{\Omega}(S,T)=\int_M tr(JST) \dvo,$$ and for $i=0,1,\cdots,m$, $\pi_i$ is the projection from $\sM$ to the $i^{th}$ factor. These forms are individually closed \cite{vbma, fine, Donaldmom, Fujiki}. The compatibility of $\Omega$  with $\mathbf{I}$ follows from the equation $JS = -SJ$ and the fact that $\omega_{A_i}$ is a $(1,1)$-form with respect to $J$ for every $i$.  
\end{proof}
\indent The group $\tilde{\mathcal{G}}$ acts on $\mathcal{N}$ in a natural manner. Namely, if $\tilde g\in \tilde{\mathcal{G}}$ covers $f\in \mathcal{H}$, then $$\tilde g\cdot (J,A) = (f_*Jf_*^{-1},\tilde g (f^{-1})^*A \tilde g^{-1} - (f^{-1})^*(d\tilde g)\tilde g^{-1}).$$
The following lemma is then obtained by simply tracing through the definitions. 
\begin{lem}
The action of $\tilde{\mathcal{G}}$ restricts to a symplectic action on $(\sM,\Omega)$. 
\end{lem}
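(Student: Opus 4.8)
The plan is to establish two things, since the action of $\tilde{\mathcal{G}}$ on $\mathcal{N}$ has already been defined: first, that this action carries $\mathcal{M}$ into itself, and second, that it preserves the two-form $\Omega$. The guiding principle throughout is to factor an arbitrary $\tilde g \in \tilde{\mathcal{G}}$ covering $f \in \mathcal{H}$ into its \emph{pullback part} (the diffeomorphism $f$) and its \emph{pure gauge part} (the fiberwise $U(1)$-transformations $g_i$), and to verify invariance under each factor separately. Since every ingredient of $\Omega$ is built naturally out of $\omega_0$ and the curvatures $\omega_{A_i}$, both checks should reduce to the equivariance of these quantities.

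First I would show the action restricts to $\mathcal{M}$. Write $J' = f_* J f_*^{-1}$ and $A_i' = \tilde g (f^{-1})^* A_i \tilde g^{-1} - (f^{-1})^*(d\tilde g)\tilde g^{-1}$. Integrability of $J'$ follows from that of $J$ because the Nijenhuis tensor transforms tensorially under the diffeomorphism $f$; compatibility and taming of $J'$ with $\omega_0$ follow because $f$ is a symplectomorphism, so $(f^{-1})^*\omega_0 = \omega_0$ and hence $J' \in \mathcal{J}$. For the connection components, a direct computation of the curvature gives $F_{A_i'} = (f^{-1})^* F_{A_i}$, using that each $g_i$ takes values in the abelian group $U(1)$ so that the curvature is gauge-invariant and only the pullback survives. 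Consequently $\omega_{A_i'} = (f^{-1})^*\omega_{A_i}$, and since pullback by $f^{-1}$ intertwines the $J$- and $J'$-bidegree decompositions and preserves positivity, $\omega_{A_i'}$ is again a positive $(1,1)$-form with respect to $J'$. This shows $\tilde g\cdot(J,A) \in \mathcal{M}$.

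For symplecticity I would use the splitting $\Omega = \pi_0^*\tilde{\Omega} + \sum_{i=1}^m \pi_i^*\Omega_i$ from the previous lemma and treat each summand. The term $\tilde{\Omega}$ is the Donaldson--Fujiki form on $\mathcal{J}$, whose invariance under $\mathcal{H}$ is classical \cite{Donaldmom, Fujiki}: since $(f^{-1})^*\omega_0 = \omega_0$ the volume form $\dvo$ is preserved, and the conjugation action $J \mapsto f_* J f_*^{-1}$ leaves $\int_M \mathrm{tr}(JST)\,\dvo$ unchanged. For the terms $\Omega_i$ I would verify invariance under each factor of the action. Under a pure gauge transformation by $g_i \in \mathcal{G}_i$, the action on $\mathcal{A}_i$ is the affine translation $A_i \mapsto A_i - (dg_i)g_i^{-1}$, so its differential is the identity on tangent vectors $a_i$; combined with the gauge-invariance of $\omega_{A_i}$, this leaves $\Omega_i$ untouched. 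Under pullback by the symplectomorphism $f$, the tangent vectors $a_i, b_i$ and the weight $\omega_{A_i}^{n-1}$ all transform by $(f^{-1})^*$, and the diffeomorphism-invariance of $\int_M$ yields $\Omega_i(\tilde g_* a, \tilde g_* b)|_{\tilde g\cdot A} = \Omega_i(a,b)|_A$. Composing the two factors gives $\tilde g^*\Omega = \Omega$.

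The main obstacle, such as it is, is purely bookkeeping: correctly computing the differential of the action on tangent vectors (the conjugation $f_*Jf_*^{-1}$ and the curvature-dependent weight $\omega_{A_i}^{n-1}$ both make the action nonlinear) and simultaneously tracking how $(f^{-1})^*$ acts on $a_i$, on the weight $\omega_{A_i}^{n-1}$, and on the domain of integration. Once one observes that each piece of $\Omega$ is assembled equivariantly from $\omega_0$ and the $\omega_{A_i}$---which transform naturally under the diffeomorphism and either trivially (on curvature) or by translation (on the affine space of connections) under the $U(1)$-gauge part---the invariance follows formally, with no analytic difficulty. This is precisely the sense in which the lemma is obtained by tracing through the definitions.
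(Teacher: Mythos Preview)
Your proposal is correct and is precisely the routine verification the paper alludes to: the paper gives no proof beyond the remark that the lemma ``is then obtained by simply tracing through the definitions,'' and your argument fills in exactly those details by checking that $\mathcal{M}$ is preserved (via the tensorial transformation of the Nijenhuis tensor and the abelian gauge-invariance of curvature) and that each summand $\tilde\Omega$, $\Omega_i$ of $\Omega$ is invariant under the diffeomorphism and pure-gauge factors separately.
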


Our aim is to show that this action is in fact Hamiltonian and to identify the moment map. To do this, we need to understand the infinitesimal action of $\mathrm{Lie}(\tilde{\sG})$ on the pair $(J,A)$.  Let $\xi \in Lie(\tilde{\mathcal{G}})$ generate the vector field $\tilde \zeta$  on $E$, covering a vector field $\zeta$ on $M$ which is Hamiltonian with respect to $\omega_0$. We let $\zeta^{hor}$ denote the horizontal lift  of $\zeta$ to a vector field on $E$, defined with respect to the connection $A$. If $t^i$ is the local complex coordinate on $L_i$, then it is not difficult to see that 
\begin{equation}\label{hor}
\zeta^{hor} = \zeta + \sum_{i=1}^m\sqrt{-1} i_\zeta A_i~\mathrm{Im}\Big(t^i\frac{\partial}{\partial t^i}\Big),\end{equation}  and hence there exist $H_{\zeta,A_i}\in C^{\infty}(M,\RR)$ such that 
\begin{equation}\label{dec}
\tilde \zeta = \zeta^{hor} - \sum_{i=1}^mH_{\zeta,A_i}~\mathrm{Im}\Big(t^i\frac{\partial}{\partial t^i}\Big).
\end{equation}
We write $H_{\zeta,A} = \mathrm{diag}(H_{\zeta,A_1},\cdots,H_{\zeta,A_m})$ and $\omega_A = \sqrt{-1}F_A = \mathrm{diag}(\omega_{A_1},\cdots,\omega_{A_m})$. We also let $H_{\zeta,0}$ be the Hamiltonian of $\zeta$ with respect to $\omega_0$. Our convention is that $$dH_{\zeta,0} = -i_{\zeta}\omega_0.$$
\begin{lem}\label{inf action}The infinitesimal action $P:\mathrm{Lie}(\tilde \sG)\rightarrow \mathcal{T}_{(J,A)}\sM$ of $\mathrm{Lie}(\tilde G)$ is given by $$P(\xi) := \xi \cdot (J,A) =(\sL_\zeta J,dH_{\zeta,A} + i_\zeta\omega_{A}),$$ where recall once again that we are identifying $T_{A_i}\sA_i$ by $\Omega^1(M,\RR)$, and hence the first term on the right is indeed a real form. In particular, if $\xi$ is in the stabilizer of $A$, then for each $i$, $\zeta$ is Hamiltonian with respect to $\omega_{A_i}$ with Hamiltonian $H_i$.
\end{lem}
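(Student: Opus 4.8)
The plan is to differentiate the group action
$\tilde g\cdot(J,A)=\bigl(f_*Jf_*^{-1},\,\tilde g(f^{-1})^*A\,\tilde g^{-1}-(f^{-1})^*(d\tilde g)\tilde g^{-1}\bigr)$
at the identity in the direction of $\xi$, treating the two components separately. For the $\sJ$-component one takes a one-parameter subgroup $\tilde g_t=\exp(t\xi)$ covering the flow $f_t$ of $\zeta$ and differentiates $f_{t*}Jf_{t*}^{-1}$ at $t=0$; this is exactly the computation underlying the Fujiki--Donaldson picture and produces $\sL_\zeta J$. The new content is the $\sA$-component. Rather than differentiate the abstract group formula directly, I would exploit the decomposition \eqref{dec} and interpret the infinitesimal action on the connection as the basic form represented by the Lie derivative of the connection $1$-form along the generating field $\tilde\zeta$ on the total space.

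More precisely, for each $i$ let $\al_i$ denote the connection $1$-form of $A_i$ on the unit circle bundle of $L_i$, so that $\al_i$ vanishes on horizontal vectors, takes a constant value on the vertical generator $\mathrm{Im}(t^i\d/\d t^i)$, and satisfies $d\al_i=\pi^*F_{A_i}$. The infinitesimal action on $A_i$ is the basic form represented by $\sL_{\tilde\zeta}\al_i$, which by Cartan's formula equals $d(i_{\tilde\zeta}\al_i)+i_{\tilde\zeta}\,d\al_i$. By \eqref{hor} and \eqref{dec}, the vertical part $-\sum_iH_{\zeta,A_i}\,\mathrm{Im}(t^i\d/\d t^i)$ contributes only to the first term and, after fixing the normalization of the generator, produces $d(i_{\tilde\zeta}\al_i)=dH_{\zeta,A_i}$; this is the familiar fact that an infinitesimal vertical gauge transformation acts on a connection by $d$ of its generating function. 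For the second term, $d\al_i=\pi^*F_{A_i}$ is basic, so only the horizontal part $\zeta^{hor}$ survives the contraction and $i_{\tilde\zeta}\,d\al_i=\pi^*(i_\zeta F_{A_i})$; recalling $\omega_{A_i}=\sqrt{-1}F_{A_i}$ and the identification $\mathcal{T}_{A_i}\sA_i=\La^1(M,i\RR)\cong\La^1(M,\RR)$, this descends to $i_\zeta\omega_{A_i}$. Adding the two terms gives the $A$-component $dH_{\zeta,A}+i_\zeta\omega_A$.

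For the final assertion, I would simply set the $\sA$-component of $P(\xi)$ to zero: $\xi$ stabilizing $A$ forces $dH_{\zeta,A_i}+i_\zeta\omega_{A_i}=0$ for each $i$, i.e.\ $dH_{\zeta,A_i}=-i_\zeta\omega_{A_i}$. Comparing with the convention $dH_{\zeta,0}=-i_\zeta\omega_0$ fixed above, this says exactly that $\zeta$ is Hamiltonian with respect to each $\omega_{A_i}$, with Hamiltonian $H_i=H_{\zeta,A_i}$.

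The main obstacle is bookkeeping rather than conceptual: one must pin down the sign and normalization conventions consistently across three places — the $U(1)$-generator normalizing $\al_i$, the factor $\sqrt{-1}$ in $\omega_{A_i}=\sqrt{-1}F_{A_i}$, and the identification $\La^1(M,i\RR)\cong\La^1(M,\RR)$ — so that both $dH_{\zeta,A}$ and $i_\zeta\omega_A$ appear with a $+$ sign and no spurious factors. One should also check that $\sL_{\tilde\zeta}\al_i$ is genuinely basic, so that it descends to a tangent vector in $\mathcal{T}_{A_i}\sA_i$ rather than merely a $1$-form on the total space; this follows from $\tilde\zeta$ being an infinitesimal automorphism commuting with the $U(1)$-action. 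Finally one verifies that the resulting variation is infinitesimally integrable, so that $P(\xi)\in\mathcal{T}_{(J,A)}\sM$ and not just $\mathcal{T}_{(J,A)}\sN$.
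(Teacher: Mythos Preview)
Your proposal is correct and reaches the same formula, but by a genuinely different route than the paper. The paper differentiates the group-action formula $\tilde g\cdot A = \tilde g(f^{-1})^*A\tilde g^{-1} - (f^{-1})^*(d\tilde g)\tilde g^{-1}$ directly in a local trivialization: writing the vertical part of $\tilde g_t = e^{t\xi}$ as $e^{\sqrt{-1}t\eta}$, one obtains $\frac{d}{dt}\big|_{t=0}\tilde g_t\cdot A = -\sL_\zeta A - \sqrt{-1}d\eta$, then applies Cartan's formula \emph{on the base} to $-\sL_\zeta A = -di_\zeta A - i_\zeta F_A$ and reads off $\eta = \sqrt{-1}i_\zeta A - H_{\zeta,A}$ from \eqref{hor}--\eqref{dec}, so that the $di_\zeta A$ terms cancel and one is left with $\sqrt{-1}(dH_{\zeta,A}+i_\zeta\omega_A)$. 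You instead work upstairs on the circle bundle with the global connection $1$-form $\al_i$ and apply Cartan's formula \emph{there} to $\sL_{\tilde\zeta}\al_i$; the decomposition \eqref{dec} feeds directly into the two terms $d(i_{\tilde\zeta}\al_i)$ and $i_{\tilde\zeta}\,d\al_i$ without ever choosing a trivialization or introducing an auxiliary $\eta$. Your version is more coordinate-free and makes the horizontal/vertical split transparent, at the cost --- which you rightly flag --- of having to fix the normalization of $\al_i$ on the vertical generator and the sign in $d\al_i=\pi^*F_{A_i}$ so as to match the paper's conventions; the paper's version is more explicit and leaves no normalization ambiguity, but must carry the local gauge parameter $\eta$ through the computation. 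Both arrive at the same expression and the same one-line argument for the stabilizer assertion.
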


\begin{proof}
The second part, namely $\xi\cdot J = L_\zeta J$ is obvious, and so we focus on the connection part. Note that locally if we write $\tilde g \cdot (p,\vec{v}) = (f(p),[g_p]\cdot \vec{v})$ for some diagonal matrix $g_p$ (depending of course on the point $p$), then the action is given by $$g\cdot A_i = [g_p] (f^{-1})^{*}A_i [g_p]^{-1} -(f^{-1})^{*}d[g_p] [g_p]^{-1}.$$ Now suppose $\tilde g_t = e^{t\xi}$ is a path in $\tilde{\sG}$ such that the corresponding vertical part is given locally by $g_{p,t} = e^{\sqrt{-1}t\eta}$, then 
\begin{align*}
\frac{d}{dt}\Big|_{t=0}\tilde g_t\cdot A &= -\mathcal{L}_\zeta A - \sqrt{-1}d\eta\\
&=-di_\zeta A - i_\zeta F_A-\sqrt{-1}d\eta.
\end{align*} Since $\eta$ is the vertical component of $\tilde\zeta$, from \eqref{hor} and \eqref{dec} it is follows that $$\eta = \sqrt{-1}i_\zeta A - H_{\zeta,A},$$ and so $$\sqrt{-1}\xi\cdot A := \frac{d}{dt}\Big|_{t=0}\tilde g_t\cdot A =-i_\zeta F_A + \sqrt{-1}dH_{\zeta,A} = \sqrt{-1}(dH_{\zeta,A} + i_\zeta\omega_A).$$ \end{proof}

We can now state the main result of this section. 

\begin{thm} The action of $\tilde{\mathcal{G}}$ on $\mathcal{M}$ is Hamiltonian with moment map
\begin{gather}
\mu_{(J,A)}(\xi) = \displaystyle \int_M \mathrm{tr}\left(H_{\zeta,A} \Big(\lambda \frac{\omega_A^n}{n!}- \dvo\Big)\right) + \int_W H_{\zeta,0} \Big(S_{\omega_0,J}\dvo - \mathrm{tr}(\omega_A)\wedge \frac{\omega_0^{n-1}}{(n-1)!}\Big),
\label{momentmapdef}
\end{gather}
where $\lambda=\mathrm{diag}(\frac{V_0}{V_1},\frac{V_0}{V_2},\ldots)$ and $\xi$ generates a vector field $\tilde\zeta$ on $E$ covering a vector field $\zeta$ on $M$.
\label{momentmapthm}
\end{thm}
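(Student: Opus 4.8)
The plan is to verify directly the defining identity of a moment map, namely that for every $\xi\in\mathrm{Lie}(\tilde{\sG})$ and every tangent vector $(S,a)\in T_{(J,A)}\sM$ one has
$$d\langle\mu,\xi\rangle_{(J,A)}(S,a)=\Omega_{(J,A)}\big(P(\xi),(S,a)\big),$$
up to the usual sign convention, where $P(\xi)=(\sL_\zeta J,\,dH_{\zeta,A}+i_\zeta\omega_A)$ is the infinitesimal action from Lemma \ref{inf action}. Since both sides are linear in $(S,a)$ and the tangent space splits into complex-structure directions $(S,0)$ and connection directions $(0,a)$, I would check the identity separately on each factor and add. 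Equivariance of $\mu$, the remaining ingredient, follows from the naturality of all the geometric quantities (curvatures, scalar curvature, Hamiltonians) under the $\tilde{\sG}$-action.

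First I would treat the complex-structure directions $(S,0)$. The key observation is that of all the quantities entering $\mu$, only the scalar curvature $S_{\omega_0,J}$ depends on $J$: the curvatures $\omega_{A_i}$ and the Hamiltonians $H_{\zeta,A_i}$ depend only on $(A,\zeta)$, since the horizontal lift in \eqref{hor} is defined using $A$ alone, while $H_{\zeta,0}$ and $\dvo$ are fixed. Hence the identity reduces to
$$\int_M H_{\zeta,0}\,(\delta_S S_{\omega_0,J})\,\dvo=\int_M \mathrm{tr}\big(J\,\sL_\zeta J\,S\big)\,\dvo,$$
which is precisely the Fujiki--Donaldson statement that the scalar curvature is a moment map for the $\sH$-action on $\sJ$; this is the analytically deepest input, and I would simply quote \cite{Donaldmom, Fujiki}.

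Next I would treat the connection directions $(0,a)$. The two inputs needed are the first variations $\delta_a\omega_{A_i}=\pm\,da_i$ and $\delta_a H_{\zeta,A_i}=\mp\,i_\zeta a_i$. The second of these is the crucial and easily overlooked point: the generator $\tilde\zeta$ on $E$ attached to a fixed $\xi$ does not move with $A$, whereas its horizontal part $\zeta^{hor}$ in \eqref{hor} does, so differentiating the decomposition \eqref{dec} in the direction $a$ forces $H_{\zeta,A_i}$ to vary. With these in hand I would differentiate both integrals defining $\mu$, integrate by parts using $d\omega_{A_i}=d\omega_0=0$, and rewrite contractions using Cartan's formula together with $dH_{\zeta,0}=-i_\zeta\omega_0$ and the pointwise identity $(i_\zeta a_i)\,\tfrac{\omega^n}{n!}=-(i_\zeta\omega)\wedge a_i\wedge\tfrac{\omega^{n-1}}{(n-1)!}$, valid for any K\"ahler form $\omega$ since $a_i\wedge\omega^n$ vanishes for degree reasons. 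The decisive cancellation is that the $i_\zeta\omega_0$ terms produced by varying the coupling term $\mathrm{tr}(\omega_A)\wedge\tfrac{\omega_0^{n-1}}{(n-1)!}$ in the second integral exactly annihilate those produced by varying $H_{\zeta,A_i}$ against $\dvo$ in the first integral, and what survives assembles into
$$-\sum_{i=1}^m\frac{V_0}{V_i}\int_M\big(dH_{\zeta,A_i}+i_\zeta\omega_{A_i}\big)\wedge a_i\wedge\frac{\omega_{A_i}^{n-1}}{(n-1)!},$$
which is exactly $\Omega_{(J,A)}(P(\xi),(0,a))$.

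I expect the main obstacle to be this connection computation rather than the scalar-curvature piece. The Fujiki--Donaldson part, while the hardest in analytic content, is standard and may be cited; the real work is the bookkeeping in the $(0,a)$ direction, specifically correctly accounting for the $A$-dependence of $H_{\zeta,A_i}$ and verifying that the cross-terms coming from the two integrals cancel so that the volume-normalization integral and the scalar-curvature integral link up through the shared Hamiltonians. Getting the sign conventions on $\delta_a\omega_{A_i}$, $\delta_a H_{\zeta,A_i}$, and the moment-map convention mutually consistent is the one place where care is required, but it affects only an overall sign and not the structure of the argument.
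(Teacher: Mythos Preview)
Your proposal is correct and follows essentially the same approach as the paper: split into $J$-directions and $A$-directions, quote Fujiki--Donaldson for the former, and for the latter use the variations $\delta_a\omega_{A_i}=-da_i$ and $\delta_a H_{\zeta,A_i}=-i_\zeta a_i$ together with integration by parts. The only organizational difference is that the paper packages your ``decisive cancellation'' of the cross-terms (varying $H_{\zeta,A}$ against $\dvo$ versus varying $\mathrm{tr}(\omega_A)\wedge\omega_0^{n-1}$) into a separate lemma (Lemma \ref{variation}), whereas you describe it inline.
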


\begin{proof}
We need to show that if $(J(t),A(t))$ is a path in $\sM$ with $A(t) =A +  \sqrt{-1}bt$ and $J'(0) = T$, then 
\begin{align*}\frac{d}{dt}\Big|_{t=0}\mu_{(J(t),A(t))}(\tilde \zeta) &= -\Omega((L_\zeta J,\xi\cdot A), (T,b))\\
&= \int_M \mathrm{tr}\Big(\la\Big(\xi\cdot A\wedge b\wedge\frac{\omega_A^{n-1}}{(n-1)!}\Big)\Big) - \int_M\mathrm{tr}(JL_\zeta J T)\dvo.
\end{align*}
Here $b$ is a diagonal matrix $b = \mathrm{diag}(b_1,\cdots,b_m)$ of real one forms. In \cite{Donaldmom, Fujiki} it is shown that 
\begin{equation}\label{don}
\frac{d}{dt}\Big|_{t=0}\int_M H_{\zeta,0} S_{\omega_0,J(t)}\dvo = -\int_M\mathrm{tr}(JL_\zeta J T)\dvo.
\end{equation}
Next, differentiating the first term in $\mu_{(J(t),A(t))}(\tilde\zeta)$, since $\frac{\omega_{A(t)}}{dt} = -db$, we have 
\begin{align*}
\frac{d}{dt}\Big|_{t=0} \int_M \mathrm{tr}\Big(\la H_{\zeta,A(t)} \frac{\omega_{A(t)}^n}{n!}\Big) &= \int_M\mathrm{tr}\Big(\la\frac{d H_{\zeta,A(t)}}{dt}\Big|_{t=0}\frac{\omega_A^n}{n!}\Big) - \int_M\mathrm{tr}\Big(\la H_{\zeta,A}db\wedge\frac{\omega_A^{n-1}}{(n-1)!}\Big)\\
&=\int_M\mathrm{tr}\Big(\la\frac{d H_{\zeta,A(t)}}{dt}\Big|_{t=0}\frac{\omega_A^n}{n!}\Big) + \int_M \mathrm{tr}\Big(\la\Big(dH_{\zeta,A}\wedge b\wedge\frac{\omega_A^{n-1}}{(n-1)!}\Big)\Big).
\end{align*}
To evaluate the first term, note that $\frac{d H_{\zeta,A(t)}}{dt} = -i_\zeta b$, and so by Lemma \ref{inf action}
\begin{align*}
\frac{d}{dt}\Big|_{t=0} \int_M \mathrm{tr}\Big(\la H_{\zeta,A(t)} \frac{\omega_{A(t)}^n}{n!}\Big)
&=-\int_M\mathrm{tr}\Big(\la i_\zeta b\frac{\omega_A^n}{n!}\Big) + \int_M \mathrm{tr}\Big(\la\Big(dH_{\zeta,A}\wedge b\wedge\frac{\omega_A^{n-1}}{(n-1)!}\Big)\Big)\\
&=\int_M \mathrm{tr}\Big(\la\xi \cdot A\wedge b\wedge\frac{\omega_A^{n-1}}{(n-1)!}\Big)\Big).
\end{align*}
Combining this with \eqref{don} above and Lemma \ref{variation} below completes the proof.
\end{proof}

The following observation can be found in \cite{Mariothesis}, and we reproduce the proof for the convenience of the reader. 

\begin{lem}\label{variation}
Let $A(t)=(A_1(t)=A_1+\sqrt{-1}b_1t,A_2(t)=A_2+\sqrt{-1}b_2t,\ldots)$ be a curve of unitary connections. Then,
\begin{gather}
\frac{d}{dt}\left (\displaystyle \int_M \mathrm{tr}(H_{\zeta,A(t)}) \omega_0^n + \int_M nH_0 \mathrm{tr}(\omega_{A(t)})\wedge  \omega_0 ^{n-1} \right)=0.  
\label{usefulidentityofmario}
\end{gather}
\end{lem}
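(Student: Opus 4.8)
The plan is to differentiate each of the two integrals separately, using the variational formulas already recorded in the proof of Lemma \ref{inf action} and in the proof of Theorem \ref{momentmapthm}, and then to show that the two resulting terms cancel index-by-index. Since $H_{\zeta,A}$ and $\omega_A$ are diagonal, the traces become plain sums over $i$, so it suffices to verify the identity for a single index $i$ and a single real one-form $b_i$.

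First I would record the two infinitesimal formulas $\frac{d}{dt}H_{\zeta,A_i(t)} = -i_\zeta b_i$ (as used in the proof of Theorem \ref{momentmapthm}) and $\frac{d}{dt}\omega_{A_i(t)} = -db_i$ (since $A_i(t)=A_i+\sqrt{-1}b_i t$ gives $F_{A_i(t)} = F_{A_i}+\sqrt{-1}t\,db_i$, whence $\omega_{A_i(t)} = \omega_{A_i}-t\,db_i$). Differentiating under the integral sign then yields
\begin{align*}
\frac{d}{dt}\Big|_{t=0}\int_M \mathrm{tr}(H_{\zeta,A(t)})\,\omega_0^n &= -\sum_i \int_M (i_\zeta b_i)\,\omega_0^n,\\
\frac{d}{dt}\Big|_{t=0}\int_M nH_0\,\mathrm{tr}(\omega_{A(t)})\wedge\omega_0^{n-1} &= -\sum_i \int_M nH_0\,db_i\wedge\omega_0^{n-1}.
\end{align*}

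Next I would transform the second expression by integration by parts. Since $\omega_0$ is closed and $M$ is compact without boundary, applying Stokes' theorem to $d(H_0\,b_i\wedge\omega_0^{n-1})$ gives $\int_M H_0\,db_i\wedge\omega_0^{n-1} = -\int_M dH_0\wedge b_i\wedge\omega_0^{n-1}$, and the convention $dH_0 = -i_\zeta\omega_0$ rewrites the right-hand side as $\int_M (i_\zeta\omega_0)\wedge b_i\wedge\omega_0^{n-1}$. For the first term the key observation is that $b_i\wedge\omega_0^n$ is a $(2n+1)$-form on the $2n$-manifold $M$, hence zero; contracting it with $\zeta$ and invoking the Leibniz rule $i_\zeta(\omega_0^n)=n\,(i_\zeta\omega_0)\wedge\omega_0^{n-1}$ produces $(i_\zeta b_i)\,\omega_0^n = n\,b_i\wedge(i_\zeta\omega_0)\wedge\omega_0^{n-1} = -n\,(i_\zeta\omega_0)\wedge b_i\wedge\omega_0^{n-1}$. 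Substituting both identities back, each summand of the first derivative exactly cancels the corresponding summand of the second, so the total derivative vanishes.

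I expect the only delicate point to be the bookkeeping of signs: the interior-product Leibniz rule, the orientation flip between $b_i\wedge(i_\zeta\omega_0)$ and $(i_\zeta\omega_0)\wedge b_i$, and the sign in $dH_0=-i_\zeta\omega_0$ all enter, and a single slip would turn the cancellation into a doubling. Conceptually there is very little content — the statement is essentially Cartan's formula repackaged through the vanishing of top-plus-one-degree forms — so once the signs are pinned down the identity follows at once.
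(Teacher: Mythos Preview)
Your proposal is correct and follows essentially the same route as the paper: the paper also uses $\frac{d}{dt}H_{\zeta,A_i(t)}=-i_\zeta b_i$ and $\frac{d}{dt}\omega_{A_i(t)}=-db_i$, the vanishing of $b_i\wedge\omega_0^n$ together with the Leibniz rule for $i_\zeta$, the convention $dH_{\zeta,0}=-i_\zeta\omega_0$, and Stokes' theorem. The only cosmetic difference is that the paper manipulates the first integral step by step into minus the derivative of the second, whereas you compute both derivatives and exhibit the cancellation directly; the ingredients and the sign bookkeeping are identical.
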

\begin{proof}
Again using the fact that $\frac{d H_{\zeta,A(t)}}{dt} = -i_\zeta b$ and $\frac{\omega_{A(t)}}{dt} = -db$, we see that
\begin{align*}
\displaystyle \int_M \frac{d}{dt}\mathrm{tr}(H_{\zeta,A(t)}) \omega_0^n &= -\int_M \mathrm{tr}(i_{{\zeta}}b) \omega_0^n  \\
&= \displaystyle \int_M n i_{{\zeta}}\omega_0 \wedge \mathrm{tr}(b) \omega_0 ^{n-1}  \\
&=  -\displaystyle \int_W n dH_{\zeta,0} \wedge \mathrm{tr}(b) \wedge \omega_0 ^{n-1}  \\ 
&= \displaystyle \int_W n H_0\mathrm{tr}(db) \wedge \omega_0 ^{n-1}  \\
&=-\displaystyle \int_W \frac{d}{dt} nH_0 \mathrm{tr}(\omega_{A(t)})\wedge \omega_0 ^{n-1}.
\end{align*}
\end{proof}
\indent To finish the proof of Theorem \ref{thm:momentmapthm}, we observe that given any $(J,A)\in \sM$, $Lie(\tilde \sG)$ can be identified with $ C^\infty(M,\RR)_0\times C^\infty(M,\RR)^{m}$, where the subscript of zero denotes functions with vanishing average with respect to $\omega_0^n$. Indeed, the discussion preceding  Lemma \ref{inf action} shows that given any $\xi \in Lie(\tilde\sG)$, one can associate a tuple $(H_{\zeta,0},H_{\zeta,A_1}, \cdots,H_{\zeta,A_m})$ of smooth functions on $M$. Conversely, given  a tuple $(H_0,H_1,\cdots,H_m)$, we let $\zeta = \nabla_{g_0}H_0$, where the gradient is taken with the respect to the Riemannian metric $g_0 = \omega_0(\cdot,J\cdot)$. Then \eqref{dec} defines a vector field $\tilde\zeta$ on $E$ covering $\zeta$, and defining an element $\xi$ of $Lie(\tilde\sG)$. With this identification, it follows that  $\mu_{(J,A)} \equiv 0$ precisely when 
\begin{align*}
\frac{\omega_{0} ^n}{V_0}&=\frac{\omega_{A_1}^n}{V_1} = \ldots =\frac{\omega_{A_m}^n}{V_m}, \text{ and}\nonumber \\
S_{\omega_0,J} &= tr_{\omega_0} (\omega_0+\omega_{A_1}+\ldots +\omega_{A_m}) + \hat S,
\end{align*}
for some constant $\hat S$. By the Kempf-Ness theorem, as long as a stability condition holds, one expects a zero to occur in the gauge orbit of the complexified Lie algebra action. Akin to the case of the Calabi Conjecture \cite{fine} and the constant scalar curvature K\"ahler equation \cite{Donaldmom, Fujiki}, a zero occurring in the complex gauge orbit is equivalent to varying the metrics $\omega_{A_i}$ in their K\"ahler classes.
\section{Coupled Futaki invariants and $K$-polystability}\label{sec:Futaki} 
 In \cite{Jakobintro, Jakobtorpap}, a Donaldson-Futaki type invariant is defined in the context of coupled K\"ahler-Einstein metrics, using Deligne pairings and intersection-theoretic formulae respectively.  In this section, we introduce an analogue of the differentio-geometric Futaki invariant in the context of these coupled equations, and show that this agrees with the formulae in \cite{Jakobintro, Jakobtorpap}, at least when $W$ is a normal $\QQ$-Fano variety. In fact, we provide twisted versions of these formulae, which we expect will be useful in studying the continuity method for the existence of coupled K\"ahler-Einstein metrics \cite{cKE}. Throughout the section, we let $W$ be a normal variety, and $L_i$ be ample line bundles on $W$ with fixed admissible (say, in the sense of \cite{Ding-Tian}) Hermitian metrics $\psi_i $ with curvature currents $\be_i = \ddbar\psi_i\in 2\pi c_1(L_i)$. We denote a general Hermitian metric on $L_i$ by $e^{-\varphi_i}$ and measure its curvature by $\omega_{\varphi_i} = \ddbar\varphi_i$. Let $\sH_i$ be the space of all positively curved admissible Hermitian metrics on $L_i$. Define $\psi := \psi_0+\cdots+\psi_m$, $\varphi := \varphi_0+\cdots+\varphi_m$  and $\be := \be_0+\cdots+\be_m \in 2\pi c_1(L)$. We denote $$\fg_W := \{w \in H^{0}(W,T^{1,0}W)~|~ w \text{ is Hamiltonian with respect to } \omega_i\in 2\pi c_1(L_i) \text{ for all }i\}$$

For $w\in \mathfrak{g}_{W}$, we denote the Hamiltonian of $w$ with respect to $\omega_{\varphi_i}$ by $\theta_{w,i}$, and let $\theta_w = \theta_{w,0} + \cdots+\theta_{w,m}$. Our convention is that $\theta_{w,i}$ solves $$\sqrt{-1}~\dbar \theta_{w,i} = i_w\omega_{\varphi_i}.$$

\begin{defn} Let $W$ be smooth. The {\em (untwisted) coupled Futaki invariant} is defined as a charater $\cFut(W,(L_i),\cdot):\fg_W \rightarrow \CC$, where 
\begin{align*}
\cFut(W,(L_i),w) &= \sum_{i=1}^m\int_W \theta_{w,i}\Big(\dvvi - \frac{\omega_0^n}{V_0n!}\Big) + \frac{1}{V_0}\int_W\theta_{w,0}(S_{\omega_0} - \hat S - \mathrm{tr}_{\omega_0}\omega)\dvo\\
&= \sum_{i=0}^m\frac{1}{V_i}\int_W \theta_{w,i}\dvj + \frac{1}{V_0}\int_W\theta_{w,0}(S_{\omega_0} - \hat S )\dvo - \frac{1}{V_0}\int_W[\theta_{w,0}\mathrm{tr}_{\omega_0}\omega + \theta_w]\dvo
\end{align*}

\end{defn}
Our first Proposition below shows that $\cFut(W,(L_i),w)$ is an actual invariant of the tuple $(W,(L_i))$, that is, it does not depend on the choice of reference K\"ahler forms in the respective classes $2\pi c_1(L_i)$. 
\begin{prop} \label{fut invariance} $\cFut(W,(L_i),w)$ is independent of the choice of metrics $\omega_i\in 2\pi c_1(L_i)$. In particular, $\cFut(W,(L_i),w)$ vanishes if $(W,(L_i))$ admits coupled cscK metrics. 

\end{prop}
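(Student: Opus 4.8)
The plan is to prove the stronger first statement---independence of the representatives $\omega_i\in 2\pi c_1(L_i)$---by showing that $\cFut(W,(L_i),w)$ has vanishing first variation as each $\omega_i$ moves in its class; since the space of K\"ahler potentials in a fixed class is convex, this gives independence. As a preliminary I would record that the definition does not depend on the additive constants in the Hamiltonians: replacing $\theta_{w,i}$ by $\theta_{w,i}+c$ for $i\geq 1$ changes $\cFut$ by $c\int_W(\dvvi-\tfrac{\omega_0^n}{V_0n!})=c(1-1)=0$, while replacing $\theta_{w,0}$ by $\theta_{w,0}+c$ changes it by $\tfrac{c}{V_0}\int_W(S_{\omega_0}-\hat S-\mathrm{tr}_{\omega_0}\omega)\dvo$, which vanishes precisely because $\hat S$ was defined so that $\int_W(S_{\omega_0}-\mathrm{tr}_{\omega_0}\omega)\dvo=\hat S\,V_0$ (a Chern-number identity). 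This freedom lets me differentiate along a path $\varphi_{i,t}$ with $u_i:=\dot\varphi_i$ using the clean normalization $\dot\theta_{w,i}=w(u_i)$, justified by $\sqrt{-1}\dbar\dot\theta_{w,i}=i_w\ddbar u_i=\sqrt{-1}\dbar(w(u_i))$, which holds since $w$ is holomorphic.

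The computation rests on three identities, all flowing from $\sqrt{-1}\dbar\theta_{w,i}=i_w\omega_i$ together with holomorphy of $w$: first, $\ddbar\theta_{w,i}=\d(i_w\omega_i)=\sL_w\omega_i$, whence $\sL_w\tfrac{\omega_i^n}{n!}=\ddbar\theta_{w,i}\wedge\tfrac{\omega_i^{n-1}}{(n-1)!}$; second, for any top-degree form $\Theta$ and one-form $\alpha$ one has $\alpha\wedge i_w\Theta=\alpha(w)\,\Theta$, so that $w(u)\,\Theta=\d u\wedge i_w\Theta$ and, after integrating by parts, $\int_W w(u)\,\Theta=-\int_W u\,\sL_w\Theta$; third, $\tfrac{d}{dt}\tfrac{\omega_i^n}{n!}=\ddbar u_i\wedge\tfrac{\omega_i^{n-1}}{(n-1)!}$ and $\ddbar$ is self-adjoint against any closed form. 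A first application of these shows that $\tfrac{1}{V_0}\int_W\theta_{w,0}\dvo$ is already invariant: its derivative equals $\int_W w(u_0)\dvo+\int_W\theta_{w,0}\ddbar u_0\wedge\tfrac{\omega_0^{n-1}}{(n-1)!}$, and integrating the second term by parts and applying the first two identities turns it into $-\int_W w(u_0)\dvo$, cancelling the first.

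Because the classes are independent I would vary one at a time. Varying $\omega_j$ for a fixed $j\geq1$, only the $j$-th summand of the first sum and the $-\mathrm{tr}_{\omega_0}\omega$ term respond, and the derivative is $\int_W w(u_j)\big(\tfrac{\omega_j^n}{V_jn!}-\tfrac{\omega_0^n}{V_0n!}\big)+\tfrac{1}{V_j}\int_W\theta_{w,j}\,\ddbar u_j\wedge\tfrac{\omega_j^{n-1}}{(n-1)!}-\tfrac{1}{V_0}\int_W\theta_{w,0}\,\ddbar u_j\wedge\tfrac{\omega_0^{n-1}}{(n-1)!}$. Integrating the last two terms by parts and using $\ddbar\theta_{w,j}\wedge\tfrac{\omega_j^{n-1}}{(n-1)!}=\sL_w\tfrac{\omega_j^n}{n!}$ (resp.\ the analogue for $\theta_{w,0}$) rewrites them as $-\tfrac{1}{V_j}\int_W w(u_j)\tfrac{\omega_j^n}{n!}$ and $+\tfrac{1}{V_0}\int_W w(u_j)\dvo$, which cancel the two pieces of the first integral. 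Thus all variations in the directions $j\geq1$ vanish identically.

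The substantive case is the variation of $\omega_0$, which I would split according to $S_{\omega_0}-\hat S-\mathrm{tr}_{\omega_0}\omega$. For the curvature part, writing $S_{\omega_0}-\hat S=(S_{\omega_0}-\bar S_0)+(\bar S_0-\hat S)$ with $\bar S_0:=\tfrac{1}{V_0}\int_W S_{\omega_0}\dvo$ the topological average, the derivative of $\tfrac1{V_0}\int_W\theta_{w,0}(S_{\omega_0}-\bar S_0)\dvo$ is exactly the first variation of the classical Calabi--Futaki invariant of $(W,L_0)$, which vanishes; the essential mechanism is that the top-order term of $\dot S_{\omega_0}=-\sD_0^{*}\sD_0 u_0+(\text{lower order})$ pairs with $\theta_{w,0}$ to give $-\int_W\langle\sD_0\theta_{w,0},\sD_0 u_0\rangle\dvo=0$ since $w$ holomorphic forces $\sD_0\theta_{w,0}=0$, and the constant $(\bar S_0-\hat S)$ multiplies the already-invariant $\int_W\theta_{w,0}\dvo$. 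For the trace part, write $\omega=\omega_0+\tilde\omega$ with $\tilde\omega=\sum_{j\geq1}\omega_j$ fixed; the self-coupling piece produces $\tfrac{n}{V_0}\int_W\theta_{w,0}\dvo$, again constant, while the cross piece $-\tfrac1{V_0}\tfrac{d}{dt}\int_W\theta_{w,0}\,\tilde\omega\wedge\tfrac{\omega_0^{n-1}}{(n-1)!}$ collapses after two integrations by parts to $\tfrac1{V_0}\int_W u_0\,\sL_w\tilde\omega\wedge\tfrac{\omega_0^{n-1}}{(n-1)!}$, which exactly cancels the contribution $-\tfrac1{V_0}\int_W u_0\,\sL_w\tilde\omega\wedge\tfrac{\omega_0^{n-1}}{(n-1)!}$ coming from differentiating the $\tfrac{\omega_0^n}{n!}$ factors in the first sum (via $\sL_w\tilde\omega=\ddbar\sum_{j\geq1}\theta_{w,j}$). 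I expect this $\omega_0$-variation to be the main obstacle, since it requires simultaneously invoking self-adjointness of the Lichnerowicz operator to kill the fourth-order scalar-curvature term and carefully bookkeeping the threefold appearance of $\omega_0$ inside $\mathrm{tr}_{\omega_0}\omega$ (as the metric, inside the traced form, and in the volume). Once every first variation is shown to vanish, $\cFut$ is independent of the $\omega_i$; evaluating at a coupled cscK solution, where $\tfrac{\omega_i^n}{V_i}$ are all equal and $S_{\omega_0}=\mathrm{tr}_{\omega_0}\omega+\hat S$, makes each integrand vanish, giving $\cFut(W,(L_i),w)=0$.
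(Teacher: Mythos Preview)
Your argument is correct, and the overall strategy---reduce to the classical invariants $\int_W\theta_{w,i}\,\omega_i^n$ and the usual Calabi--Futaki invariant, then dispose of the remaining ``cross'' term by integration by parts---is the same as the paper's. The organization, however, is genuinely different. The paper works from the \emph{second} displayed expression for $\cFut$, which immediately splits off $\sum_{i=0}^m V_i^{-1}\int_W\theta_{w,i}\,\omega_i^n$ and the classical Futaki term for $L_0$; what remains is the single quantity $p(s)=\int_W\theta_{w,0,s}\,\mathrm{tr}_{\omega_{0,s}}\omega_s\,\omega_{0,s}^n/n!+\int_W\theta_{w,s}\,\omega_{0,s}^n/n!$, and the paper varies \emph{all} $\omega_i$ simultaneously and shows $p'(s)=0$ by a direct local-coordinate computation (this is also the content of Lemma~\ref{variation} in the moment-map section). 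You instead work from the \emph{first} displayed form of $\cFut$, vary one $\omega_j$ at a time, and handle every step with the global identities $\ddbar\theta_{w,i}=\sL_w\omega_i$ and $\int_W w(u)\,\Theta=-\int_W u\,\sL_w\Theta$. Your route is a little more coordinate-free and makes the cancellations for $j\geq 1$ very transparent; the paper's route is more economical, since after the regrouping only the single function $p$ needs analysis. Either packaging yields the result.
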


\begin{proof}We consider a family of metrics $\omega_{i,s} = \omega_i + s\ddbar\eta_i$, with corresponding Hamiltonian functions $\theta_{w,i,s} = \theta_{w,i} + sw(\eta_i)$, and denote $\omega_s = \sum_{i=0}^m\omega_{i,s}$ and $\theta_{w,s} = \sum_{i=0}^m\theta_{w,i,s}$. Defining $$f(s) = \sum_{i=1}^m\int_W \theta_{w,i,s}\Big(\dvvjs - \frac{\omega_{0,s}^n}{V_0n!}\Big) + \frac{1}{V_0}\int_W\theta_{w,0,s}(S_{\omega_{0,s}} - \hat S - \mathrm{tr}_{\omega_{0,s}}\omega_s)\dvos,$$ our aim is then to show that $f'(s) = 0$. We rewrite $$f(s) =  \sum_{i=0}^m\int_W \theta_{w,i,s}\dvvjs +  \frac{1}{V_0}\int_W\theta_{w,0,s}(S_{\omega_{0,s}} - \hat S )\dvos - \frac{p(s)}{V_0},$$ where $$p(s) = \int_W\theta_{w,0,s}\mathrm{tr}_{\omega_{0,s}}\omega_s\dvos + \int_W \theta_{w,s}\dvos.$$ It is well known that the first two terms in the expression of $f(s)$ are invariants of the respective K\"ahler classes, and hence it is sufficient to show that $p'(s) = 0$. This is analogous to Lemma \ref{variation}, and indeed is a consequence of it (cf. Remark \ref{rem:futandmoment}). Rather than relying on the moment map interpretation, we  give a direct proof here.   In the computation below, all covariant derivatives are taken with respect to $\omega_{0,s}$, and we also suppress the dependence of the Hamiltonians on $w$. Denoting by $\Lambda_{0,s}$, the contraction by $\omega_{0,s}$, we compute,
\begin{align*}
p'(s) &= \int_W w(\eta_0)\La_{0,s}\omega_s \dvos - \int_W\theta_{0,s}\nabla^{k}\nabla^{\bar l}\eta_0(g_s)_{k\bar l}\dvos + \int_W\theta_{0,s}\Delta_{0,s}\eta \dvos\\
&+\int_W\theta_{0,s}\La_s\omega_s~\Delta_{0,s}\eta_0\dvos + \int_Ww(\eta)\dvos + \int_W\theta_{s}\Delta_{0,s}\eta_0\dvos,\\
\end{align*}

Firstly, for the third term, we notice that  
\begin{align}\label{eq:1}
\int_W\theta_{0,s}\Delta_{0,s}\eta \dvos = -\int_Ww(\eta)\dvos.
\end{align}

Next, integrating the second term by parts, and noting that $\nabla^k\theta_{0,s}(g_s)_{k\bar l} = w^k (g_{s})_{k\bar l} =\nabla_{\bar l}\theta_{s} $,
\begin{align}\label{eq:2}
- \int_W\theta_{0,s}\nabla^{k}\nabla^{\bar l}\eta_0(g_s)_{k\bar l}\dvos =- \int_W\theta_s\Delta_{0,s}\eta_0\dvos + \int_W\theta_{0,s}\nabla^{\bar l}\eta_0\nabla_{k}(g_s)_{k\bar l}\dvos,
\end{align}
where we integrated by parts a second time in the first term. Now it is easy to see  (for instance by using normal coordinates for $\omega_{0,s}$) that, $\nabla_{k}(g_s)_{k\bar l} = \nabla_{\bar l}\mathrm{tr}_{\omega_{0,s}}\omega_s$, and so 
\begin{align*}
\int_W\theta_{0,s}\nabla^{\bar l}\eta_0\nabla_{k}(g_s)_{k\bar l}\dvos &= \int_W\theta_{0,s}\nabla^{\bar l}\eta_0\nabla_{\bar l}\mathrm{tr}_{\omega_{0,s}}\omega_s\dvos\\ 
&=  -\int_W w(\eta_0) \La_{{0,s}}\omega_s\dvos - \int_W \theta_{0,s}\Delta_{0,s}\eta_0 \La_{0,s}\omega_s\dvos,
\end{align*}
and so 
\begin{align}\label{eq:3}
- \int_W\theta_{0,s}\nabla^{k}\nabla^{\bar l}\eta_0(g_s)_{k\bar l}\dvos =- \int_W\theta_s\Delta_{0,s}\eta_0\dvos  &- \int_W w(\eta_0) \La_{{0,s}}\omega_s\dvos\\& - \int_W \theta_{0,s}\Delta_{0,s}\eta_0 \La_{0,s}\omega_s\dvos. \nonumber
\end{align}
Then combining \eqref{eq:1} and \eqref{eq:3} we see that $p'(s) = 0.$

\end{proof}
 
\begin{rem} [Futaki invariant and the moment map formalism]\label{rem:futandmoment}
The Futaki invariant is essentially the moment map from the previous section, evaluated on a certain subspace of $Lie(\mathcal{G}^\CC)$. More precisely, let $K \subset \mathcal{G}$ be the stabilizer of a tuple $(A_1,\cdots,A_m,J) \in \sM$. Then $K$ is a finite dimensional compact Lie group, and hence has a complexification, which we denote by $K^\CC$. Note also, that by Lemma \ref{inf action}, since $K$ is a stabilizer for $(J,A)$, each $\tilde w \in K$ covers a vector field $w$ which is Hamiltonian with respect to each of $\omega_i = \omega_{A_i}$, and so we can identify $\mathrm{Lie}(K^\CC)$ as a subspace of $\fg_W$. Then it is easy to see that for any $w\in \mathrm{Lie}(K^\CC)$,$$\cFut(W,w) = \mu_{(J,A)}(\tilde w),$$ if we normalize $\theta_{w,0}$ to have zero mean with respect to $\dvo$. Next, observe that moving $(\omega_0,\cdots,\omega_m)$ in their K\"ahler classes is equivalent to the action of $K^\CC$ on $(J,A)$, and then Proposition \ref{fut invariance} is simply the formal statement that if $\tilde g_t\in K^\CC$, then $$\frac{d}{dt}[\mu_{\tilde g_t\cdot(J,A)}(\mathrm{ad}_{\tilde g_t}(\tilde w))] = 0,$$ since $\mathrm{ad}_{\tilde g_t}w$ lies in the complexification of the stabilizer of $g_t\cdot(J,A)$, and thus corresponds to the zero vector in $\mathcal{T}_{\tilde g_t\cdot (J,A)}\sM$.  Also, note that with this formalism, $p'(s) = 0$ in the proof above is essentially equivalent to Lemma \ref{variation}.
\end{rem}

\subsection{Donaldson-Futaki invariants and K-polystability}\label{subsec:DF} In order to define K-polystability, it is necessary to extend the above definition of the coupled Futaki invariant to possibly singular varieties $W$. While one can probably extend the techniques in \cite{Ding-Tian} to achieve this objective, following Donaldson \cite{Do}, we instead prove an alternate algebro-geometric formula, which in turn can be used as a definition of the coupled Futaki invariant on singular varieites. So consider a smooth polarized tuple $(W,(L_i))$ as before, but now with a $\CC^*$ action on each total space $L_i$, covering a fixed $\CC^*$ action on $W$ generated by a holomorphic vector field $w$. By the Riemann-Roch theorem, the dimensions $d_{i,k}$ of $H^0(M,L_i^k)$ satisfy the expansion $$(2\pi)^nd_{i,k} = a_{i,0}k^n + a_{i,1}k^{n-1} + O(k^{n-2}),$$ where 
\begin{equation}\label{a}
a_{i,0} = \int_W\dvj = (2\pi)^n\frac{L_i^n}{n!},~a_{i,1} = \frac{1}{2}\int_WS_{\omega_i}\dvj = (2\pi)^n\frac{(-K_W)\cdot L_i^n}{2(n-1)!}.
\end{equation}
Next, note that if $\hat w_i$ is the vector field generated by the $\CC^*$ action on $L_i$, then just as in \eqref{dec}, one can write $$\tilde w_i = w_i^{hor} - \sqrt{-1}\theta_{w,i}\underline t,$$ where $ w_i^{hor}$ is the horizontal lift of $w$ with respect to the Chern connection of $e^{-\varphi_i}$ (or equivalently the Levi-Civita connection of $\omega_i$), and $\underline{t}$ is the canonical vertical vector field on $L_i$.  The fact that $\hat w_i$ is holomorphic is then precisely the condition that $\theta_{w,i}$ is the Hamiltonian of $w$ with respect to $\omega_i$. Now if $w_{i,k}$ is the total weight of the action on $H^0(W,L_i^k)$, then by the equivariant Riemann-Roch theorem, $$(2\pi)^nw_{i,k} = b_{i,0}k^{n+1} + b_{i,1}k^{n} + O(k^{n-1}),$$ where 
\begin{equation}\label{b}
b_{i,0} = -\int_W\theta_{w,i}\dvj,~b_{i,1} = -\frac{1}{2}\int_WS_{\omega_i}\theta_{w,i}\dvj. 
\end{equation}
We denote the coefficients corresponding to $L$ by $a_0,a_1,b_0$ and $b_1$. Additionally, we also need to consider the space of sections of $L_0^k\otimes L^{-1}$, and we denote the corresponding dimension and weight by $d_{t,k}$ and $w_{t,k}$ respectively. Then by Corollary \ref{twisted expansion} in the Appendix we have 
\begin{align*}
(2\pi)^nd_{t,k} = a_{t,0}k^{n} + a_{t,1}k^{n-1} + O(k^{n-2})\\
(2\pi)^nw_{t,k} = b_{t,0}k^{n+1} + b_{t,1}k^{n} + O(k^{n-1}), 
\end{align*} where 
\begin{align}\label{twisted a b}
a_{t,0} &=\int_W\dvo,~ a_{t,1} = \int_W\Big(\frac{S_{\omega_0}}{2} - \mathrm{tr}_{\omega_0}\omega\Big)\dvo\\
b_{t,0} &= -\int_W\theta_0\dvo,~ b_{t,1} = -\frac{1}{2}\int_W\theta_0S_{\omega_0}\dvo + \int_W[\theta_0\mathrm{tr}_{\omega_0}\omega + \theta]\dvo.\nonumber
\end{align}
A simple computation now proves the following 
\begin{prop}\label{DF}
Suppose there is a $\CC^*$ action on $(W,(L_i))$ covering a $\CC^*$ action on $W$ generated by a holomorphic vector field $w$, then $$\cFut(W,(L_i),w) =  \Fut(W,L_0,w) -\sum_{i=0}^m\frac{b_{i,0}}{a_{i,0}} +  \frac{(a_{t,1}b_{t,0} - a_{t,0}b_{t,1})}{a_{t,0}^2},$$ where $\Fut(W,L_0,w)$ is classical Futaki invariant for the class $L_0$ given by $$\Fut(W,L_0,w) = \frac{(a_{0,1}b_{0,0} - a_{0,0}b_{0,1})}{a_{0,0}^2}.$$
\end{prop}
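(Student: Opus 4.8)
The plan is to expand the right-hand side using the coefficient formulas \eqref{a}, \eqref{b}, and \eqref{twisted a b}, and to match the result term-by-term against the second expression for $\cFut$ in the definition. The first and easiest observation is that $a_{i,0} = \int_W \dvj = V_i$, so that $-\sum_{i=0}^m b_{i,0}/a_{i,0} = \sum_{i=0}^m \frac{1}{V_i}\int_W \theta_{w,i}\dvj$; this already reproduces the leading sum in $\cFut$. It then remains to show that $\Fut(W,L_0,w)$ together with the twisted ratio $(a_{t,1}b_{t,0}-a_{t,0}b_{t,1})/a_{t,0}^2$ accounts exactly for the two remaining terms $\frac{1}{V_0}\int_W \theta_{w,0}(S_{\omega_0}-\hat S)\dvo$ and $-\frac{1}{V_0}\int_W[\theta_{w,0}\mathrm{tr}_{\omega_0}\omega + \theta_w]\dvo$.

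Before substituting I would record that both sides are unchanged under a constant shift of the Hamiltonians. For the classical ratio $(a_1 b_0 - a_0 b_1)/a_0^2$ this is immediate from $b_0 \mapsto b_0 - c\,a_0$ and $b_1 \mapsto b_1 - c\,a_1$. For $\cFut$ and for the full right-hand side it is a short check that the non-invariant pieces cancel; the cancellation of the $\theta_{w,0}$-shift rests precisely on the cohomological identity $2a_{0,1} = \hat S\,V_0 + \int_W \mathrm{tr}_{\omega_0}\omega\,\dvo$, which is the integrated form of $S_{\omega_0} = \mathrm{tr}_{\omega_0}\omega + \hat S$ and holds because all three quantities are intersection numbers, with $\hat S = n(-K_M-L)\cdot L_0^{n-1}/L_0^n$ chosen exactly for this. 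Having established invariance, I would normalize $\theta_{w,0}$ to have zero mean against $\omega_0^n$, so that $b_{0,0} = b_{t,0} = 0$.

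With this normalization the two cross terms $a_{0,1}b_{0,0}$ and $a_{t,1}b_{t,0}$ drop out, and the two ratios collapse to $\Fut(W,L_0,w) = -b_{0,1}/V_0 = \frac{1}{2V_0}\int_W S_{\omega_0}\theta_{w,0}\dvo$ and $(a_{t,1}b_{t,0}-a_{t,0}b_{t,1})/a_{t,0}^2 = -b_{t,1}/V_0 = \frac{1}{2V_0}\int_W S_{\omega_0}\theta_{w,0}\dvo - \frac{1}{V_0}\int_W[\theta_{w,0}\mathrm{tr}_{\omega_0}\omega + \theta_w]\dvo$, using $a_{t,0}=V_0$ and the formula for $b_{t,1}$ from \eqref{twisted a b}. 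Adding the three contributions, the two copies of $\frac{1}{2V_0}\int_W S_{\omega_0}\theta_{w,0}\dvo$ combine to the full $\frac{1}{V_0}\int_W S_{\omega_0}\theta_{w,0}\dvo$, which equals $\frac{1}{V_0}\int_W \theta_{w,0}(S_{\omega_0}-\hat S)\dvo$ under the zero-mean normalization, and everything matches $\cFut$ line for line.

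The computation is elementary once the coefficient formulas are in hand; the only genuinely substantive input is Corollary \ref{twisted expansion}, which supplies $a_{t,1}$ and $b_{t,1}$ in \eqref{twisted a b}, and whose $b_{t,1}$ is exactly what produces the coupling terms $\mathrm{tr}_{\omega_0}\omega$ and $\theta_w$. The step I would be most careful about is the bookkeeping of the normalization and the verification of the $\hat S$-identity above, which is what legitimizes the zero-mean reduction; one could instead avoid normalizing and carry all mean terms through, but this is messier and ultimately relies on the same cohomological identity.
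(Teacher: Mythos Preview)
Your proposal is correct and is exactly the ``simple computation'' the paper alludes to without spelling out: one substitutes the formulas \eqref{a}, \eqref{b}, \eqref{twisted a b} into the right-hand side and matches against the second displayed form of $\cFut$. Your normalization $\int_W\theta_{w,0}\,\dvo=0$ is a clean way to organize the bookkeeping, and your check that both sides are shift-invariant (resting on the cohomological identity $2a_{0,1}=\hat S\,V_0+\int_W\mathrm{tr}_{\omega_0}\omega\,\dvo$) is the right justification for it; the paper omits all of this, so your write-up in fact supplies more detail than the original.
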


\begin{rem}\label{Ruis rem}
Our formula above is analogous to the formula for the Donaldson-Futaki invariant obtained in \cite{Der} in the context of twisted cscK metrics. Moreover it is also shown in that paper (cf.\ \cite[Lemma 2.30]{Der}) that when $W$ is an arbitrary (possibly non-smooth) normal variety, the dimensions $d_{t,k}$ and the weights $w_{t,k}$ are given by polynomials of degrees $k^n$ and $k^{n+1}$ respectively. Based on this, we can then use the right hand side of the formula above to {\em define} the coupled Futaki invariant for normal varieties. 
\end{rem}
In view of the above remark, we can now finally define Donaldson-Futaki invariants for test configurations of polarized tuples with normal central fibres and the relvant notion of K-polystability. 
\begin{defn} A test configuration (with exponent $r$) for $(W,L_0,L_1,\cdots,L_m)$, with $L = \otimes L_i$ as above, consists of a normal variety $\sW$ polarized by a tuple $(\sL_0,\cdots,\sL_m)$ with the following additional data : 
 \begin{enumerate}
 \item A $\CC^*$ action on $\sW$ lifting to actions on $(\sL_0,\cdots,\sL_m)$.
 \item A flat $\CC^*$ equivariant map $\pi:\sW\rightarrow \PP^1$, where $\PP^1$ has the standard $\CC^*$ action, such that $(\pi^{-1}(\PP^1\setminus\{0\},\sL_0,\cdots,\sL_m, \otimes_{i=0}^m \sL_i)$ is equivariantly isomorphic to $(W\times \CC,p_W^*L^r_0,\cdots,p_W^*L_m^r,p_W^*L^r)$.
 \end{enumerate}
 A test configuration is called a {\em product}, if $(\sW,\sL_0,\cdots,\sL_m, \otimes_{i=1}^m \sL_i)$ is equivariantly isomorphic to $(W\times \PP^1,p_W^*L^r_0,\cdots,p_W^*L_m^r,p_W^*L^r)$. It is called a {\em smooth degeneration} if $\mathcal{W}_0$ is non-singular. 
 
 \end{defn}
 
 \begin{defn} The Donaldson-Futaki invariant for a test configuration $(\sW, (\sL_i))$ of $(W,(L_i))$ is defined as  $$\DF(\sW,(\sL_i)):= \cFut(\sW_0,(\sL_{i,0}),w),$$ where $\sL_{i,0} := \sL_{i}\Big|_{\pi^{-1}(0)}$ and $w$ is the vector field generating the induced $\CC^*$ action on $\sW_0$. In the event that $\sW_0$ is singular, the coupled Futaki invariant is defined simply by the right side in Proposition \ref{DF} (cf.\ Remark \ref{Ruis rem}). \end{defn}
 
 \begin{defn}
 A tuple $(W,(L_i))$ is said to be said to be {\em semistable} if for any test configuration $(\sW,(\sL_i))$, $$\DF(\sW,(\sL_i))\geq 0.$$ It is said to be {\em polystable} if, in addition to being semistable, the Donaldson-Futaki invariant vanishes if and only if $(\sW,(\sL_i))$ is a product.
 \end{defn}

\subsection{Coupled Donaldson-Futaki invariants on Fano manifolds} \label{Fano} We now specialize to the case when $W$ is a normal $\QQ$-Fano variety, and $L = K_W^{-r}$. Recall that $K_W$ is well defined as a Weil divisor, and $K_W^{-r}$ extends as an ample line bundle on $W$ for some $r\in \NN$. For simplicity of notation, we assume that $r=1$ throughout. We then call such a polarized tuple  $(W,(L_i))$, a {\em  polarized Fano tuple}. Note that $e^{-\varphi}$ and $e^{-\psi}$ are now Hermitian metrics on $K_W^{-1}$, and hence are volume forms on $W$, and also that $\hat S = 0$. 

In \cite{Jakobtorpap}, an intersection-theoretic definition of the Donaldson-Futaki invariant is given in the context of test configurations for Fano tuples.  Our next aim is to show (cf.\ Theorem \ref{equivalence}) that in this special case, our formula for the Donaldson-Futaki invariant agrees with the one in \cite{Jakobtorpap}.    As a first step towards proving Theorem \ref{equivalence}, we obtain a much simpler formula for  the coupled Futaki invariant on Fano tuples analogous to the formula in  \cite{He} for the classical Futaki invariant. An advantage is that even though it is an integral formula (as opposed to an algebro-geometric one), it is much more transparently well defined on possibly singular normal varieties. While this paper was in preparation, an analogous formula appeared in the context of coupled Sasaki-Einstein metrics in \cite{Futakicoupled}.

\begin{lem}\label{Fano Futaki}
If $(W,(L_i))$ be a smooth polarized Fano tuple as above, then $$\cFut(W,w) = \sum_{i=0}^m\frac{1}{V_i}\int_W\theta_{w,i}\dvj - \frac{\int_W\theta_we^{-\varphi}}{\int_We^{-\varphi}}.$$
\end{lem}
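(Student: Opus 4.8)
The plan is to start from the second expression for $\cFut(W,(L_i),w)$ in the definition and use $\hat S = 0$ (valid since $L = K_W^{-1}$) to reduce the claim to the single identity
$$\frac{1}{V_0}\int_W\theta_{w,0}\big(S_{\omega_0} - \mathrm{tr}_{\omega_0}\omega\big)\dvo - \frac{1}{V_0}\int_W\theta_w\dvo = -\frac{\int_W\theta_w e^{-\varphi}}{\int_W e^{-\varphi}},$$
since the terms $\sum_{i=0}^m\frac{1}{V_i}\int_W\theta_{w,i}\dvj$ already match the two sides. The first move is to express the integrand geometrically. Writing $h := \log\big(\frac{\omega_0^n/n!}{e^{-\varphi}}\big)$ for the (globally defined) logarithmic ratio of the two volume forms, the Fano condition gives $\mathrm{Ric}(\omega_0) - \omega = -\ddbar h$, because $\omega = \omega_\varphi$ is the curvature of the metric $e^{-\varphi}$ on $K_W^{-1}$ while $\mathrm{Ric}(\omega_0) = -\ddbar\log(\omega_0^n/n!)$. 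Taking traces with respect to $\omega_0$ yields $S_{\omega_0} - \mathrm{tr}_{\omega_0}\omega = -\Delta_{\omega_0} h$.

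Next I would integrate by parts. Since $w$ is Hamiltonian with respect to $\omega_0$ with potential $\theta_{w,0}$, one has $w = \nabla^{1,0}_{\omega_0}\theta_{w,0}$ (in coordinates $w^j = g_0^{j\bar k}\partial_{\bar k}\theta_{w,0}$), and the self-adjointness of $\Delta_{\omega_0}$ together with the Kähler identity $\partial_{\bar k}(g_0^{j\bar k}\det g_0)=0$ gives
$$\int_W \theta_{w,0}\big(-\Delta_{\omega_0}h\big)\dvo = \int_W w(h)\,\dvo.$$
The key remaining ingredient is a divergence identity for the canonical measure $e^{-\varphi}$: since $\omega = \omega_\varphi$ and $\theta_w$ is the Hamiltonian of $w$ with respect to $\omega$, a direct computation of $\overline{\partial}\,\mathrm{div}_{e^{-\varphi}}(w)$ shows that $\mathrm{div}_{e^{-\varphi}}(w) + \theta_w$ is holomorphic, hence constant; integrating against $e^{-\varphi}$ fixes the constant and gives $\mathrm{div}_{e^{-\varphi}}(w) = -\theta_w + \frac{\int_W\theta_w e^{-\varphi}}{\int_W e^{-\varphi}}$.

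Finally I would combine these. Using $\dvo = e^{h}\,e^{-\varphi}$, I rewrite $\int_W w(h)\dvo = \int_W w(e^{h})\,e^{-\varphi}$ and integrate by parts against $e^{-\varphi}$, substituting the divergence identity, to obtain $\int_W w(h)\dvo = \int_W\theta_w\dvo - V_0\,\frac{\int_W\theta_w e^{-\varphi}}{\int_W e^{-\varphi}}$. Dividing by $V_0$ and subtracting $\frac{1}{V_0}\int_W\theta_w\dvo$ collapses the expression to the claimed $-\frac{\int_W\theta_w e^{-\varphi}}{\int_W e^{-\varphi}}$. I expect the main obstacle to be the divergence identity of the previous paragraph — in particular pinning down the additive constant correctly and checking that the final formula is independent of the normalizations of the Hamiltonians $\theta_{w,i}$ (which it must be, by Proposition \ref{fut invariance}); the remaining steps are standard Kähler integrations by parts.
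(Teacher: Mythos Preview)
Your argument is correct and is essentially the same as the paper's proof: both reduce to the Ricci potential $h$ (the paper's $h_0$ is your $-h$), integrate by parts to convert $\int_W\theta_{w,0}\Delta_{\omega_0}h$ into $\int_W w(h)$, and then invoke the identity that $\Delta_{\omega_0}\theta_{w,0} - w(h) + \theta_w$ is a constant determined by integration against $e^{-\varphi}$. Your ``divergence identity'' $\mathrm{div}_{e^{-\varphi}}(w) + \theta_w = \mathrm{const}$ is exactly the paper's identity $\Delta_{\omega_0}\theta_{w,0} + \theta_w + w(h_0) = c$ (since $\mathrm{div}_{e^{-\varphi}}(w) = \mathrm{div}_{\omega_0^n}(w) - w(h) = \Delta_{\omega_0}\theta_{w,0} - w(h)$), and your final integration by parts against $e^{-\varphi}$ is the paper's evaluation of $c$ via $e^{h_0}\omega_0^n \propto e^{-\varphi}$.
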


\begin{proof}
Note that $\hat S = 0$ since $L = -K_W$. If $h_0\in C^{\infty}(W,\RR)$ such that $$\mathrm{Ric}(\omega_0) - \omega = \ddbar h_0,$$ then 
\begin{align*}
\cFut(W,w) &= \sum_{i=0}^m\frac{1}{V_i}\int_W\theta_{w,i}\dvj + \frac{1}{V_i}\int_W\theta_{w,0}\Delta_{\omega_0}h_0\dvo - \frac{1}{V_0}\int_W\theta_w\dvo,\\
&= \sum_{i=0}^m\frac{1}{V_i}\int_W\theta_{w,i}\dvj - \frac{1}{V_0}\int_W[w(h_0)+\theta_w]\dvo.
\end{align*}
A simply computation (for instance by taking $\dbar$ on both sides) shows that $$\Delta_{\omega_0}\theta_{w,0} + \theta_w + w(h) = c$$ for some constant $c$, and hence $$ \frac{1}{V_0}\int_W[w(h_0)+\theta_w]\dvo = c.$$  So it is enough to evaluate the constant $c$. Integrating, with respect to $e^{h_0}\omega_0^n$ we see that 
\begin{align*}
cV_0 &= \int_W [\Delta_{\omega_0}\theta_{w,0} + w(h_0)]e^{h_0}\dvo + \int_W\theta_we^{h_0}\dvo\\
&= \int_W\theta_we^{h_0}\dvo. 
\end{align*}
Next, from the definition of $h_0$, it is easy to see that $e^{h_0}\omega_0^n = be^{-\varphi}$ for some constant $b$, which by integration can be found to be $$b= \frac{V_0}{\int_We^{-\varphi}}.$$ Together with the formula for $c$ above, we see that $$c = \frac{\int_W \theta_we^{-\varphi}}{\int_W e^{-\varphi}}.$$
\end{proof}

 \begin{prop}\label{equivalence} Let $(\sW,(\sL_i))$ be a test configuration with a smooth central fiber $(W,(L_i))$, and let $w$ be the induced holomorphic vector field on W. Then $$\DF(\sW,(\sL_i)) = -\frac{1}{n+1}\sum_{i=1}^m\frac{\sL_i^{n+1}}{L_i^n} + \frac{(K_{\sW/\PP^1} +\sum_j\sL_i)\cdot(\sum\sL_i)^n}{(-K_W)^n}$$
 \end{prop}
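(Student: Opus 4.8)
The plan is to evaluate the differential-geometric coupled Futaki invariant on the (smooth) central fiber and then convert each of its terms into intersection numbers on the compactified total space $\sW\to\PP^1$. Since the central fiber $(W,(L_i))$ is smooth and $L=-K_W$, by definition $\DF(\sW,(\sL_i))=\cFut(W,w)$, and Lemma \ref{Fano Futaki} already records the convenient expression
$$\DF(\sW,(\sL_i)) = \sum_{i=0}^m\frac{1}{V_i}\int_W\theta_{w,i}\frac{\omega_i^n}{n!} - \frac{\int_W\theta_w e^{-\varphi}}{\int_W e^{-\varphi}}.$$
It then suffices to identify each summand with an intersection number, the inputs being the asymptotic equivariant Riemann--Roch expansions \eqref{a}, \eqref{b} and \eqref{twisted a b}, read on the total space as in Wang--Odaka and Dervan.

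First I would treat the weight integrals. For each $i$ the coefficients in \eqref{a}, \eqref{b} give $\frac{1}{V_i}\int_W\theta_{w,i}\frac{\omega_i^n}{n!} = -b_{i,0}/a_{i,0}$, and the standard identification of the leading weight coefficient of $(\sW_0,\sL_{i,0})$ with an intersection number on $\sW$, namely $b_{i,0}=(2\pi)^n\sL_i^{n+1}/(n+1)!$ (up to the paper's sign convention), together with $a_{i,0}=(2\pi)^nL_i^n/n!=V_i$, yields $\frac{1}{V_i}\int_W\theta_{w,i}\frac{\omega_i^n}{n!} = -\frac{1}{n+1}\frac{\sL_i^{n+1}}{L_i^n}$. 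Summing over $i\geq1$ produces the first term of the claimed formula, and I would set aside the $i=0$ contribution $-\frac{1}{n+1}\sL_0^{n+1}/L_0^n$ for the moment.

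The substantive step is the last term $-\int_W\theta_w e^{-\varphi}/\int_W e^{-\varphi}$. Here $\theta_w=\sum_i\theta_{w,i}$ is the Hamiltonian for the action on $L=-K_W$ induced by $\otimes_i\sL_i$, while $e^{-\varphi}$ is the adapted volume form of that same metric on $-K_W$; thus the quantity is, up to sign, the derivative along the $\CC^*$-orbit of the functional $\varphi\mapsto-\log\int_W e^{-\varphi}$, a canonical (Ding-type) weight. I would compute it from the twisted expansion of Corollary \ref{twisted expansion}, whose coefficients \eqref{twisted a b} already package the contribution of the relative canonical class $K_{\sW/\PP^1}$; equivalently, this is Dervan's intersection formula for the twisted Donaldson--Futaki invariant (cf.\ Remark \ref{Ruis rem} and \cite{Der}) applied to $L_0$ twisted by $-L=K_W$. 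Using $L=-K_W$, the combined evaluation of the reserved $i=0$ term and this Ding-type term is $\frac{(K_{\sW/\PP^1}+\sum_j\sL_j)\cdot(\sum_j\sL_j)^n}{(-K_W)^n}$, the auxiliary $\sL_0^{n+1}$ contributions cancelling. Collecting the pieces gives the stated identity.

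The main obstacle is precisely this translation of the canonical/Ding weight $\int_W\theta_w e^{-\varphi}$ into intersections on $\sW$: unlike the pure weight integrals, it couples the given polarization $\sum_j\sL_j$ (through $\theta_w$) to the canonical density $e^{-\varphi}$, and it is the Fano normalization $L=-K_W$ that makes the relative canonical class $K_{\sW/\PP^1}$ the correct comparison and forces the spurious $\sL_0^{n+1}$ terms to cancel. Beyond this, the remaining work is bookkeeping: tracking the signs and the $(2\pi)^n$ and factorial normalizations in \eqref{a}, \eqref{b}, \eqref{twisted a b} so that the coefficients assemble into the clean weights $\tfrac{1}{n+1}$ and $(-K_W)^{-n}$.
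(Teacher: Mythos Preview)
Your overall plan---start from Lemma \ref{Fano Futaki} and convert each summand into an intersection number via the Li--Xu/Wang--Odaka identifications---is exactly what the paper does, and your treatment of the weight integrals $\frac{1}{V_i}\int_W\theta_{w,i}\frac{\omega_i^n}{n!}=-\frac{1}{n+1}\frac{\sL_i^{n+1}}{L_i^n}$ is correct. The discrepancy is in how you handle the Ding-type term and the ``reserved'' $i=0$ piece.

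In the paper's argument there is no cancellation of an $\sL_0^{n+1}$ contribution. One applies the Li--Xu formulae \eqref{li-xu formulae} directly to $\Nu=\sL=\otimes_i\sL_i$, whose restriction to the central fiber is $-K_W$; this gives
\[
\frac{(K_{\sW/\PP^1}+\sL)\cdot\sL^n}{(-K_W)^n}=\frac{1}{(-K_W)^n}\Big(n\int_W\theta\,c_1(\omega)\wedge\omega^{n-1}-(n+1)\int_W\theta\,\omega^n\Big),
\]
and then the standard identity $\Delta_\omega\theta_w+\theta_w+w(h_\omega)=\mathrm{const}$ turns the right-hand side into $-\int_W\theta_w e^{-\varphi}\big/\int_W e^{-\varphi}$ on its own. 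Consequently the intersection formula matches Lemma \ref{Fano Futaki} with the sum running over \emph{all} $i=0,\dots,m$; the lower index $i=1$ in the displayed statement is a typo (compare \eqref{first step}). Your proposal to route the Ding term through the twisted expansion \eqref{twisted a b} for $L_0^k\otimes L^{-1}$ produces intersection numbers in powers of $\sL_0$, not $\sL$, and there is no mechanism by which these combine with the reserved $-\frac{1}{n+1}\sL_0^{n+1}/L_0^n$ to yield $(K_{\sW/\PP^1}+\sL)\cdot\sL^n/(-K_W)^n$; that claimed cancellation is the gap. The fix is simply to drop the twisted-Bergman detour, apply \eqref{li-xu formulae} and \eqref{coefficients} to $\sL$ itself, and keep the $i=0$ term in the first sum.
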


 \begin{proof}
 In the classical case of K\"ahler-Einstein metrics, such intersection formulae were first obtained in \cite{Wa} and \cite{Odaka}. We instead follow the exposition in \cite{Li-Xu}.  Let $(\sW,\Nu)$ be any test configuration (so $m= 0$ in the above definition). The $\CC^*$ action induces an action on the total space $\Nu_0$ covering a $\CC^*$ action on $\sW_0$, which we assume is generated by the vector field $w$. By the Riemann-Roch theorem, $$\mathrm{dim} H^0(\sW_0,\Nu_0) = a_0(\Nu_0)k^n + a_1(\Nu_0)k^{n-1} + O(k^{n-2}). $$Now, $\CC^*$ acts on $\Nu_0$, so if $w_k(\Nu_0)$ is the total weight of the $\CC^*$ action on $H^0(\sW_0, \Nu_0)$, by the equivariant Riemann-Roch theorem, $$w_k(\Nu_0) = b_0(\Nu_0)k^{n+1} + b_1(\nu_0)k^n + O(k^{n-1}),$$ where $n$ is the dimension of $\sW_0$. Then it is shown in \cite{Li-Xu}, that  
 
  \begin{align}\label{li-xu formulae}
 b_0(\Nu_0) &= \frac{\Nu^{n+1}}{(n+1)!}\\
 b_1(\Nu_0) &= \frac{1}{2}\frac{(-K_\sW)\cdot \Nu^n}{n!} - a_0(\Nu_0).\nonumber
 \end{align}
 
 On the other hand, given any Hermitian metric $e^{-\nu}$ with positive curvature form $-\sqrt{-1}\Omega$ on $\Nu_0$, the $\CC^*$ action on the total space $\Nu_0$ induces a Hamiltonian $H$ for $w$ with respect to $\Omega$ and similar to similar to formulae \eqref{a}-\eqref{b}, we have
 
 \begin{align}\label{coefficients}
a_0(\Nu_0) = (2\pi)^{-n}\int_{\sW}\frac{\Omega^N}{N!} = \frac{\Nu_0^n}{n!},~a_1(\Nu_0) = \frac{(2\pi)^{-n}}{2}\int_{\sW}S_{\Omega}\frac{\Omega^N}{N!}\\
b_0(\Nu_0) = -(2\pi)^{-n}\int_{\sW}H\frac{\Omega^N}{N!},~b_1(\Nu_0) = -\frac{(2\pi)^{-n}}{2}\int_\sW HS_\Omega\frac{\Omega^N}{N!}\nonumber.
\end{align}

Applying the second formula in \eqref{li-xu formulae} to $\Nu = \sL$ and $\Nu_0 = -K_{W}$, we see that 
 \begin{align*}
 K_{\sW/\PP^1}\cdot\sL^n &= K_{\sW}\cdot \sL^n - \pi^*K_{\PP^1}\cdot\sL^n\\
 &= -2n!b_1(-K_W).
 \end{align*}
Now applying the first formula in \eqref{li-xu formulae} and the formulae in \eqref{coefficients} to $\Nu \in \{\sL_1,\cdots,\sL_m,\sL\}$, we have 
 \begin{align}\label{first step}
-\frac{1}{n+1}\sum_{i=0}^m\frac{\sL_i^{n+1}}{L_i^n} &=  \sum_{i=0}^m\frac{1}{V_i} \int_W \theta_{w,i}\dvj \\
 \frac{(K_{\sW/\PP^1} +\sum_j\sL_i)\cdot(\sum\sL_i)^n}{(-K_W)^n} &=\frac{1}{(-K_W)^n} \Bigg ( \displaystyle n\int_W {\theta} c_1 ({\omega})\wedge {\omega}^{n-1}- (n+1) \int_W {\theta}\omega^n\Bigg ) \nonumber
\end{align}
For the second equation if we let $h_\omega$ be the Ricci potential of $\omega$, that is $$\mathrm{Ric}(\omega) - \omega = \ddbar h_\omega,$$ then,
\begin{align}\label{second step}
\frac{(K_{\sW/\PP^1} +\sum_i\sL_i)\cdot(\sum\sL_i)^n}{(-K_W)^n}  &= \frac{1}{(-K_W)^n} \Bigg ( -\int_W {\theta} {\omega}^n +n\int_W {\theta} \partial \bar{\partial} {h_\omega} \wedge {\omega}^{n-1} \Bigg)\\
&= \frac{1}{(-K_W)^n} \Bigg ( -\int_W {\theta} {\omega}^n -\int_W w({h_\omega}) {\omega}^{n} \Bigg)  \nonumber\\
&= -\frac{\int_W\theta \,e^{-\varphi}}{\int_W e^{-\varphi}},\nonumber
\end{align} 
where we have used the well known fact that $\theta_w$ satisfies $$\Delta_\omega \theta_w + \theta_w + w(h) =\frac{\int_W\theta \,e^{-\varphi}}{\int_W e^{-\varphi}}.$$ Combining \eqref{first step} and \eqref{second step} with Lemma \ref{Fano Futaki} completes the proof of the proposition. \end{proof}
\begin{rem}
Proposition \ref{equivalence}, and the formula in Lemma \ref{Fano Futaki}, in all likelihood also hold when $W$ is a $\QQ$-Fano normal variety. To prove this, one would have to show that the formulae \eqref{twisted a b} for the coefficients of the twisted weights also hold in this generality.  This can probably be done by using the equivariant Riemann-Roch theorem (cf. \cite{Do}) to calculate the coefficients that appear in Lemma 2.30 in \cite{Der}.
\end{rem}

\subsection{An aside: Twisted coupled K\"ahler-Einstein metrics} We continue using the notation of subsections \ref{subsec:DF} and \ref{Fano} above. In particular, recall that $e^{-\psi_i}$ is a continuous metric on $L_i$ with curvature $-\sqrt{-1}\be_i$, and we let $\psi + \psi_0+\cdots+\psi_m$.

\begin{defn} Twisted coupled K\"ahler-Einstein metrics  on $(W,(1-t)\psi, (L_i))$ are a tuple 
$(e^{-\varphi_0},\cdots,e^{-\varphi_m})$ of positively curved Hermitian metrics on $(L_1,\cdots,L_m)$ solving $$\frac{\omega_{\varphi_0}^n}{V_0} = \cdots=\frac{\omega_{\varphi_m}^n}{V_m} = \frac{e^{-t\varphi-(1-t)\psi}}{\int_W e^{-t\varphi - (1-t)\psi}},$$ where $V_i =  (2\pi L_i)^n/n!$ is the volume of $W$ with respect to the class $ 2\pi c_1(L_i)$.

\end{defn}

The twisted coupled Futaki invariant is defined as a character on the restricted Lie algebra $$\mathfrak{g}_{W,\psi} :=\{w\in {\rm H^0(W,TW)}~|~w \text{ generates a $\CC^*$ action, and } i_w \be_i = 0,~\text{for all $i = 0,1,\cdots,m$}\}.$$ 
 
\begin{defn} \label{futakidefinition} 

The {\em twisted coupled Futaki invariant} is defined by $$\ctFut(W,w) := \cFut(W,w) -(1-t)\sum_{i=0}^m\frac{1}{V_i}\int_W\theta_{w,i}(\beta_i - \omega_{\varphi_{i}})\wedge \frac{\omega_{\varphi_i}^{n-1}}{(n-1)!}$$

\end{defn}

The next proposition shows that the above formulae do define invariants of the respective K\"ahler classes. 

\begin{prop}Let $\varphi_{i,s} = \varphi_i+ s\eta_i$, $\omega_{i,s} = \ddbar\varphi_{i,s}$, $\theta_{w,i,s}$ the corresponding Hamiltonians, and define $$f(s) =\sum_{i=0}^m\frac{1}{V_i}\int_W\theta_{w,i,s}\dvjs - \frac{\int_W\theta_w e^{-\varphi_s}}{\int_W e^{-\varphi_s}}-(1-t)\sum_{i=0}^m\frac{1}{V_i}\int_W\theta_{w,i,s}(\beta_i - \omega_{{i,s}})\wedge \frac{\omega_{{i,s}}^{n-1}}{(n-1)!}.$$  Then $f'(s) = 0$.

\end{prop}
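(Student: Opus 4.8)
The plan is to split $f(s)$ into the ``untwisted'' part, consisting of its first two terms, and the ``twisting'' part, and to show that each has vanishing $s$-derivative. By Lemma \ref{Fano Futaki} the first two terms are precisely $\cFut(W,w)$ computed with the representatives $\omega_{\varphi_{i,s}}\in 2\pi c_1(L_i)$. The vector field $w$ lies in $\mathfrak{g}_{W,\psi}$, so $i_w\beta_i=0$; this forces $w$ to be Hamiltonian with respect to each $\omega_{\varphi_i}$ (indeed $i_w\omega_{\varphi_i}=i_w\beta_i+\sqrt{-1}\dbar(w(\varphi_i-\psi_i))=\sqrt{-1}\dbar(w(\varphi_i-\psi_i))$), whence $\mathfrak{g}_{W,\psi}\subset\fg_W$ and Proposition \ref{fut invariance} applies. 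It shows the untwisted part is independent of the representative, hence constant in $s$, so its $s$-derivative vanishes and only the twisting term remains.

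For the twisting term it suffices to treat each summand separately, i.e. to prove that for each $i$ the quantity
$$G_i(s) := \int_W\theta_{w,i,s}\,(\beta_i-\omega_{i,s})\wedge\frac{\omega_{i,s}^{n-1}}{(n-1)!}$$
is independent of $s$ (the prefactor $(1-t)/V_i$ is irrelevant). The two structural facts I will exploit are that $\beta_i$ and $\omega_{i,s}$ are cohomologous, so $\beta_i-\omega_{i,s}=\ddbar\rho_{i,s}$ with $\rho_{i,s}=\psi_i-\varphi_{i,s}$ and additive constants in $\theta_{w,i,s}$ drop out against $\int_W(\beta_i-\omega_{i,s})\wedge\omega_{i,s}^{n-1}=0$, and that $i_w\beta_i=0$ by the definition of $\mathfrak{g}_{W,\psi}$.

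First I would integrate by parts, using the symmetry of $\ddbar$ against the closed form $\omega_{i,s}^{n-1}/(n-1)!$ together with the Hamiltonian relation $\sqrt{-1}\dbar\theta_{w,i,s}=i_w\omega_{i,s}$, to rewrite $G_i(s)=-\int_W w(\rho_{i,s})\,\frac{\omega_{i,s}^n}{n!}$; here the degree vanishing $\d\rho_{i,s}\wedge\omega_{i,s}^n=0$ converts the contraction $i_w(\omega_{i,s}^n/n!)$ into $w(\rho_{i,s})$. I would then differentiate in $s$, using $\frac{d}{ds}\rho_{i,s}=-\eta_i$ and $\frac{d}{ds}\omega_{i,s}=\ddbar\eta_i$, and integrate by parts once more. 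The identity $\ddbar\big(w(\rho_{i,s})\big)=\mathcal{L}_w\beta_i-\ddbar\theta_{w,i,s}$, obtained from $\sqrt{-1}\dbar(w(\rho_{i,s}))=i_w\ddbar\rho_{i,s}=i_w(\beta_i-\omega_{i,s})$ and the holomorphicity of $w$, reduces $G_i'(s)$ to $-\int_W\eta_i\,\mathcal{L}_w\beta_i\wedge\frac{\omega_{i,s}^{n-1}}{(n-1)!}$, once the $\ddbar\theta_{w,i,s}$-contribution cancels against $\int_W w(\eta_i)\frac{\omega_{i,s}^n}{n!}$ by the same degree trick.

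The main point, and the step where the hypothesis on $w$ is indispensable, is the vanishing of this leftover term. Since $w\in\mathfrak{g}_{W,\psi}$ we have $i_w\beta_i=0$ and hence $\mathcal{L}_w\beta_i=\d\,i_w\beta_i=0$ (as $\beta_i$ is closed), so the leftover term is zero and $G_i'(s)=0$. Without the condition $i_w\beta_i=0$ this term would survive, which is exactly why the twisted invariant is only defined on the restricted algebra $\mathfrak{g}_{W,\psi}$; the computation is otherwise entirely parallel to Proposition \ref{fut invariance}, and the invariance of the twisting term can alternatively be read off from the twisted cscK formalism of \cite{Der}. Combining the two parts gives $f'(s)=0$.
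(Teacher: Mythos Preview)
Your argument is correct. Both you and the paper split $f(s)$ into an untwisted piece and the twisting term, and both ultimately rest on the identity \eqref{key identity} (you implicitly, via Lemma \ref{Fano Futaki}). The paper's decomposition is slightly finer: it rewrites the $e^{-\varphi_s}$ term directly via \eqref{key identity} as the classical Futaki invariant plus $\frac{1}{V}\int_W\theta_{w,s}\,\omega_s^n/n!$, leaving four pieces each of which is a textbook invariant (classical $\Fut$, integrals of the form $\int\theta\,\omega^n$, and the twisting term), and then simply asserts that differentiating each gives zero. Your decomposition is coarser but cleaner: you bundle the first three of those pieces into $\cFut(W,w)$ and invoke Proposition \ref{fut invariance} as a black box. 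On the twisting term you actually do more than the paper---you carry out the differentiation and isolate exactly where the hypothesis $i_w\beta_i=0$ enters (through $\mathcal{L}_w\beta_i=0$), whereas the paper leaves this as ``clearly invariants of the K\"ahler class.'' Either route works; yours makes the role of the restricted algebra $\fg_{W,\psi}$ more transparent.
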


\begin{proof}
This proposition follows easily by rewriting the coupled Futaki invariant in terms of the classical Futaki invariant. It is a standard fact that for any K\"ahler metric $\omega_s = \nddbar\varphi_{s} \in c_1(W)$ and any holomorphic vector field $w$ with Hamiltonian $\theta_{w,s}$, 
\begin{align}\label{key identity}
\Delta \theta_{w,s} + \theta_{w,s} + w(h_{\omega_{w,s}}) = \frac{\int_W\theta_{w,s}e^{-\varphi_s}}{\int_We^{-\varphi_s}},
\end{align} and so 
\begin{align*}
f(s) = \Fut(W,w) &+ \sum_{i=0}^m\frac{1}{V_i}\int_W\theta_{w,i,s}\dvjs - \frac{1}{V}\int_W\theta_{w,s}\dvols \\ &-(1-t)\sum_{i=0}^m\frac{1}{V_i}\int_W\theta_{w,i,s}(\beta_i - \omega_{{i,s}})\wedge \frac{\omega_{{i,s}}^{n-1}}{(n-1)!}.
\end{align*}
 Now the last three terms are clearly invariants of the K\"ahler class, as can be seen by differentiating them.
 \end{proof}


\section{Perturbing coupled cscK metrics}\label{sec:perturbsection} 
\indent We prove Theorem \ref{thm:perturb} in this section. In what follows, we assume that $(M,J, (L_i))$ admit coupled cscK metrics $(\omega_0,\cdots,\omega_m)$. We fix hermitian metrics $h_i$ on $L_i$ with Chern connection $A_i$ and curvature $-\sqrt{-1}\omega_i$ to obtain a point $(J,A_1,\cdots,A_m)\in \sM$. Recall that in section \ref{sec:moment} we interpreted coupled cscK metrics as zeros of a moment map $\mu:\sM\rightarrow \mathrm{Lie}(\tilde G)$, and so $\mu(J,A) = 0$. Even though the gauge group $\sG$ might not have a complexification, following the ideas in \cite{Donaldmom}, a key point is that one can make sense of the orbits of such a complexification. First, note that $\mathrm{Lie}(\tilde \sG)$ has a Lie algebra complexification $\mathrm{Lie}(\tilde \sG)^\CC$ and the infinitesimal action from Lemma \ref{inf action} has an obvious extension which we still denote by $$P:\mathrm{Lie}(\tilde\sG)^\CC\rightarrow\mathcal{T}_{(J,A)}\sM.$$  We then say that $(A_0,J_0)$ and $(A_1,J_1)$ are in the same $\mathrm{\tilde \sG}^\CC$ orbit if there is a path $(A_t,J_t)$ in $\sM$ and a path $\xi_t\in \mathrm{Lie}(\tilde\sG)^\CC$ such that for all $t\in [0,1]$, $$\frac{d}{dt}(A_t,J_t) = P(\xi_t).$$ The basic ideas of Br\"onnle \cite{Bron} and Sz\'ekelyhidi \cite{Gabor}, with small modifications due to \cite{Inoue, Rui} can now be summarized as follows.
\begin{enumerate}
\item Following Kuranishi \cite{Kur}, one constructs a holomorphic slice $\Phi : B_1 \rightarrow \mathcal{M}$ such that $\Phi(x)$ meets the $\tilde\sG^\CC$ orbit of every $J'$ sufficiently close to $J$. Here $B_1$ is a small ball (whose size is decided as one goes along the proof) in a finite dimensional space $\tilde H^1$ ``normal" to the action of the complex gauge group. The finite dimensional subgroup $K\subset \tilde\sG$ stabilizing $(J,A)$ has a legitimate complexification $K^\CC$ and also has a natural action on $\tilde H^1$. One can then ensure that whenever $x, x^{'} \in B$ lie in a $K^\CC$-orbit, then $\Phi(x), \Phi(x^{'})$ lie in the same complex infinite-dimensional gauge orbit.
\item Using the implicit function theorem, one can perturb the image of $\Phi$ within the same complex gauge orbit so that the infinite-dimensional moment map $\mu(\Phi(x)) \in \mathfrak{k}$ where $\mathfrak{k}$ is the Lie algebra of $K$.
\item One pulls back the GIT problem from $\mathcal{M}$ to $\tilde{H}^1$, i.e., one considers $\tilde{H}^1$ with the symplectic form $\Phi_{x=0}^{*}\Omega$ and the linear Hamiltonian action of $K$. By the finite-dimensional Kempf-Ness theorem, there exists a zero of the corresponding moment map $\nu$ at a vector $v_0 \in \tilde{H}^1$ in any polystable gauge orbit. It is easy to see that such a  $v_0 \in B$.
\item Thanks to a small generalisation of an observation of Donaldson (proposition 9 in \cite{Gabor}), as long as the derivative of the moment map $\mu$ is uniformly invertible in a neighbourhood, and $\mu(x_0)$ is small, one can perturb $x_0$ to $y$ such that $\mu(y)=0$. A calculation shows that $\mu(\Phi(tv_0)) = O(t^3)$. A few estimates then show that the assumptions of Donaldson's lemma are satisfied and hence there exists a $v \in  B$ such that $\mu(\Phi(v))=0$ whenever $v$ is polystable with respect to $K^\CC$.
\item This step is due to \cite{Rui, Inoue} and it fills a possible gap in the proof of \cite{Gabor}. There exists a $v\in B$ so that the slightly deformed tuple that is under consideration is $\Phi(v)$. If $v$ is $K^{\CC}$-polystable, we are done by the previous steps. If it is either strictly semistable or unstable, the Hilbert-Mumford criterion implies that a limiting object is a zero of the finite-dimensional moment map $\nu$ (in the unstable case, $0$ is the limit). Using the previous steps we produce a coupled cscK metric on the limiting object (in case the limit is $0$, it already has a coupled cscK metric by assumption). Using a construction of a test configuration due to \cite{Gabor}, we see that  K-polystability implies that the limiting object is biholomorphic to $(M,J',A')$ and thus we have a coupled cscK metric on $(M,J',A')$.
\end{enumerate}
 \indent The first step, i.e., constructing a slice, is accomplished by using proposition 3 of \cite{Mariotipler}. 
\begin{rem}
Strictly speaking, we need to work with $\mathcal{N}_k$, the completion of $\mathcal{N}$ in the $H^k$ norm, instead of $\mathcal{N}$. However, firstly, we can choose a sufficiently large $k$ so that the resulting objects are highly differentiable. And at the end of the day, we aim to produce a $C^l$-smooth tuple satisfying the coupled cscK equations. If  $l$ is sufficiently large, which can be ensured by choosing $k$ sufficiently large to begin with, elliptic regularity ensures smoothness of the $C^l$ solutions. Secondly,  the relevant Hilbert manifolds on which we apply the implicit function theorem are carefully spelt out in \cite{Inoue}. For the sake of clarity in exposition, we work with $\mathcal{N}$ just as in \cite{Gabor}. 
\label{hilbertmanifold}
\end{rem}
 Since we restrict ourselves to integrable tuples, we consider the following maps $\tilde{\partial}_i : T_{(J,A)}\mathcal{N} \rightarrow \Omega^{0,2}(T^{1,0})\times \Omega^{(0,2)}$ given by $$\tilde{\partial}_i(T,b) =\left(\bar{\partial}_JT,\bar{\partial}_{(J,A)}b+\frac{\sqrt{-1}}{2}F_A^{T}\right),$$ where recall that $T$ is a section of $TM\otimes T^*M$, and $F_A^T$ is defined simply by contracting $F_A$ with the $TM$ part of $T$. These maps detect whether infinitesimal deformations of $(J,A)$ are integrable or not. By repeated applications of proposition 2 of \cite{Mariotipler}, it can be easily proven that the complex
$$Lie(\tilde{\mathcal{G}})^\CC\xrightarrow{P} T_{(J,A)}\mathcal{N} \xrightarrow{\tilde{\partial}_1 \oplus \tilde{\partial}_2\ldots} \Omega^{0,2}(T^{1,0})\times\Omega^{0,2}\times \Omega^{0,2}\ldots $$ is an elliptic complex. Denote by $\tilde{\partial}$ the map $\tilde{\partial}_1\oplus \tilde{\partial}_2\ldots$. Let $\tilde{H}^1$ be the subspace $\tilde{H}^1=ker(\Delta)\subset T_{(J,A_1,\ldots)}\mathcal{N}$ where $\Delta = PP^{*}+\tilde{\partial}^{*} \tilde{\partial}$. Note that $\tilde{H}^1$ consists of infinitesimal integrable deformations that are orthogonal to the complex gauge orbit. Let $K\subset \tilde{\mathcal{G}}$ denote the stabilizer of $(J,A)$ and $\mathfrak{k}$ its Lie algebra. Then $K$ is a finite-dimensional Lie group, and the kernel of $P$ can be identified with $\mathfrak{k}$. Denote by $K^\CC$ the complexification of $K$. We can now complete steps 1 and 2 in the above strategy.
\begin{prop}\label{kuranishi}
There exists a small ball centred at the origin $B\subset \tilde{H}^1$ and a map $\Phi : B \rightarrow \mathcal{N}$ such that 
\begin{enumerate}
\item $\Phi$ is $K$-equivariant, holomorphic, and $\Phi(0)=(J,A_1,\ldots,A_m)$.
\item The $\tilde{\mathcal{G}}^\CC$ orbit of every integrable almost complex structure $J'$ near $J$ intersects intersects the image of $\Phi$.
\item If $x,x'$ are in the same $K^\CC$-orbit and $\Phi(x)$ is integrable, then $\Phi(x), \Phi(x')$ are in the same $\tilde{\mathcal{G}}^\CC$-orbit.
\item $\mu(\Phi(x)) \in \mathfrak{k} \ \forall \ x \in B$, where $\mu$ is the moment map in theorem \ref{momentmapthm}. (We assume that Lie algebras are identified with their duals using a metric.)
\end{enumerate}
\label{firststepslice}
\end{prop}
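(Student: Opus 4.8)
The plan is to separate the four conclusions into the slice properties (1)--(3), which are the formal output of a Kuranishi-type construction, and property (4), which is an additional correction obtained from the implicit function theorem after passing to the $H^k$ completions of Remark \ref{hilbertmanifold}. The structural input throughout is the Hodge decomposition attached to the elliptic complex displayed just before the statement: writing $\Delta = PP^* + \tilde{\partial}^*\tilde{\partial}$ with Green's operator $G$, one has $T_{(J,A)}\mathcal{N} = \mathrm{im}\,P \oplus \tilde{H}^1 \oplus \mathrm{im}\,\tilde{\partial}^*$, where $\tilde{H}^1 = \ker\Delta$ is the harmonic space of infinitesimally integrable deformations orthogonal to the complexified gauge orbit.

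First I would construct $\Phi$ and establish (1)--(3) by invoking Proposition 3 of \cite{Mariotipler}, which is exactly the slice theorem for this setup. Concretely, for small $x \in \tilde{H}^1$ one solves the Kuranishi equation
$$\Phi(x) = (J,A) + x + \tfrac12\,\tilde{\partial}^* G\,\mathcal{Q}\big(\Phi(x)\big),$$
where $\mathcal{Q}$ is the quadratic part of the integrability operator $\tilde{\partial}$; the contraction mapping principle produces a unique small solution, and since the defining equation is holomorphic with respect to $\mathbf{I}$ and $\Phi(0)=(J,A)$, this yields a holomorphic map with the correct basepoint. Because $K$ fixes $(J,A)$ and acts by isometries commuting with $P$ and $\tilde{\partial}$, hence with $G$ and $\tilde{\partial}^*$, the construction is automatically $K$-equivariant, giving (1). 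Property (2), that the $\tilde{\mathcal{G}}^\CC$-orbit of every integrable $J'$ near $J$ meets the image, is the transversality content of the slice theorem: the complement of the orbit tangent space $\mathrm{im}\,P$ is modelled on $\tilde{H}^1 \oplus \mathrm{im}\,\tilde{\partial}^*$, and the $\mathrm{im}\,\tilde{\partial}^*$ directions are removed by the integrability constraint. Property (3) follows from the $K$-equivariance upgraded through the holomorphic $K^\CC$-action, exactly as in \cite{Gabor, Mariotipler}.

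The substantive step is (4). Given the slice above, I would move each $\Phi(x)$ along its complex gauge orbit to place the moment map into $\mathfrak{k}$. Let $\mathfrak{k}^{\perp}$ be the orthogonal complement of $\mathfrak{k}$ in $\mathrm{Lie}(\tilde{\mathcal{G}})$ and set
$$\mathcal{F}(x,\xi) = \Pi_{\mathfrak{k}^{\perp}}\,\mu\big(e^{\mathbf{I}\xi}\cdot \Phi(x)\big), \qquad \xi \in \mathfrak{k}^{\perp},$$
where $e^{\mathbf{I}\xi}\cdot$ denotes the path in the formal $\tilde{\mathcal{G}}^\CC$-orbit generated by $P(\mathbf{I}\xi)$. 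Since $\mu(J,A)=0$ we have $\mathcal{F}(0,0)=0$, and differentiating in $\xi$ using the moment-map identity $d\langle\mu,\xi\rangle = \iota_{P(\xi)}\Omega$, the complex linearity $P(\mathbf{I}\eta)=\mathbf{I}P(\eta)$, and the compatibility of $\Omega$ with $\mathbf{I}$ gives
$$\langle\partial_\xi\mathcal{F}(0,0)\eta,\xi\rangle = g\big(P(\xi),P(\eta)\big) = \langle P^*P\,\xi,\eta\rangle,$$
so $\partial_\xi\mathcal{F}(0,0) = \Pi_{\mathfrak{k}^\perp}P^*P$. As $\ker P = \mathfrak{k}$ at the zero $(J,A)$, this is a self-adjoint elliptic operator, positive definite and hence invertible on $\mathfrak{k}^\perp$. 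The implicit function theorem then produces a $K$-equivariant family $\xi(x)$ with $\mu\big(e^{\mathbf{I}\xi(x)}\cdot\Phi(x)\big)\in\mathfrak{k}$, and replacing $\Phi$ by $x\mapsto e^{\mathbf{I}\xi(x)}\cdot\Phi(x)$ preserves (1)--(3), since the correction stays inside the complex gauge orbit, while delivering (4).

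The main obstacle is property (4), and within it the rigorous verification that $\partial_\xi\mathcal{F}(0,0)=\Pi_{\mathfrak{k}^\perp}P^*P$ is a Fredholm isomorphism on $\mathfrak{k}^\perp$ in the $H^k$ setting, so that the implicit-function solution $\xi(x)$ exists, is $K$-equivariant, and is small enough to keep $\Phi(x)$ inside the slice. The remaining delicacy, that $\tilde{\mathcal{G}}$ itself need not complexify, is handled exactly as in the paragraph preceding the statement: both $e^{\mathbf{I}\xi}\cdot$ and the $\tilde{\mathcal{G}}^\CC$-orbits are interpreted through integration of the infinitesimal action $P$ on $\mathrm{Lie}(\tilde{\mathcal{G}})^\CC$.
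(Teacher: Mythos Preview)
Your argument is correct and follows the same two-stage strategy as the paper: invoke Proposition~3 of \cite{Mariotipler} to obtain a Kuranishi slice satisfying (1)--(3), then perturb within the complex gauge orbit via the implicit function theorem to achieve (4). The one technical difference worth noting is that you flow along the imaginary direction $\mathbf{I}P(\xi)$, so that the linearization of your map $\mathcal{F}$ at the origin is $\Pi_{\mathfrak{k}^\perp}P^*P$, manifestly positive on $\mathfrak{k}^\perp$; the paper instead defines the flow $F_{\tilde g}$ along the real direction $P(\tilde g)$ and arrives at the condition $\Omega(P(\zeta),P(v))=0$ for all $\zeta\in\mathfrak{k}^\perp$, from which it concludes $P(v)=0$. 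Your formulation is the cleaner one here, since at a zero of $\mu$ the real orbit is isotropic (indeed $\Omega(P(\zeta),P(v))=\langle\mu,[\zeta,v]\rangle$ vanishes identically), whereas pairing against $\mathbf{I}P(\eta)$ produces the Riemannian inner product $g(P(\zeta),P(\eta))$ exactly as you compute, making the invertibility on $\mathfrak{k}^\perp$ transparent.
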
 
\begin{proof}
An application of proposition 3 to each of the line bundles $L_i$ yields a map $$\Phi_{1} : B_1 \rightarrow \mathcal{N}$$ satisfying all the requirements except the last. Since $\mu(\Phi_{1}(0))=0 \in \mathfrak{k}$, just like in \cite{Gabor}, one can hope to perturb $\Phi_{1}$ within a complex gauge orbit to get a $\Phi$ so that $\mu(\Phi(x))\in \mathfrak{k}$.  \\
\indent Denote by $\mathfrak{k}_{u,l}^{\perp}$ the Sobolev space (where $u,l\gg 1$) of tuples of elements $\tilde{g}=(\sqrt{-1}g_1,\sqrt{-1}g_2,\ldots,\sqrt{-1}g_m, H_0)$ such that they are $L^2$-perpendicular to $\mathfrak{k}$ and $g_i \in H^s \ \forall \ i, H_0 \in H^l$. We identify $Lie(\tilde{\mathcal{G}})$ with $Lie(\mathcal{G})\times Lie(\mathcal{H})$ treating them purely as vector spaces. Let $U_{l,l} \in \mathfrak{k}_{l,l} ^{\perp}$ be a small ball around the origin. Consider the map $G : B_1 \times U_{l,l} \rightarrow \mathfrak{k}^{\perp}_{l-2,l-4}$
\begin{gather}
G(x,\tilde{g})=(\sqrt{-1}g_1,\ldots,\sqrt{-1}g_m,H_0))=\mu^{\perp}(F_{\tilde{g}}(\Phi_1(x))),
\label{mainmap}
\end{gather}
where $F_{\tilde{g}} : \mathcal{N} \rightarrow \mathcal{N}$ is obtained by the unit time flow of the infinite-dimensional vector field induced by $\tilde{g}$, i.e., $F_{\tilde{g}}(J,A_1,A_2,\ldots) = (J(1),A_1(1),\ldots) $ where $$\frac{d(J(t), A_1(t),A_2(t)\ldots)}{dt} = P_{J(t),A(t)}(\tilde{g}).$$ 
Denote $(J(t), A_1(t),\ldots, A_m(t))$ by $w(t)$.We now use the implicit function theorem on Hilbert manifolds to prove that $\tilde{g}$ can be solved for smoothly in terms of $x$ so that $G(x,\tilde{g}(x))=0$ near $x=0$. To this end, we need to prove that $D_{\tilde{g}}G(0)$ is an isomorphism, i.e., there is no vector $v\in \mathfrak{k}^{\perp}_{l,l}$ such that $\frac{dG}{ds} \vert_{s=0} =0$ where $\frac{d\tilde{g}(s)}{ds} \vert_{s=0} =v$. Indeed,
\begin{gather}
0=\frac{dG}{ds} \vert_{s=0} = d\mu^{\perp}_{J,A_1,A_2,\ldots}\left(\frac{dF_{\tilde{g}}}{ds}\vert_{s=0}\right) \nonumber \\
\Rightarrow d\langle \zeta, \mu_{w(0)} \rangle \left(\frac{dF_{\tilde{g}}}{ds}\vert_{s=0}\right) = 0 \ \forall \ \zeta \in \mathfrak{k}^{\perp}.  
\label{infchange}
\end{gather} 
Note that 
\begin{gather}
\frac{d(dw/ds)\vert_{s=0}}{dt} = P_{w(0,t)}\left(\frac{d\tilde{g}}{ds} \vert_{s=0}\right)=P_{w(0)}(v),
\end{gather}
where the last equality holds because at $s=0$, $w(t)=w(0) \ \forall \ t$. Since $F_{\tilde{g}(s)}(J,A_1,A_2,\ldots)= w(s,1)$, we see that 
\begin{gather}
\frac{dF_{\tilde{g}}}{ds}\vert_{s=0} = P_{J,A_1,\ldots}(v).
\label{usefullittle}
\end{gather}
Substituting \ref{usefullittle} in \ref{infchange} we see that
\begin{gather}
0=d\langle \zeta, \mu_{w(0)} \rangle \left( P_{J,A_1,\ldots}(v)\right) = \Omega_{(J,A_1,A_2\ldots)}(P(\zeta),P(v)) \ \forall \ \zeta \in \mathfrak{k}^{\perp},
\end{gather}
thus implying that $P(v)=0$, i.e., $v\in \mathfrak{k}$ which is a contradiction. Hence $D_{\tilde{g}}G(0)$ is an isomorphism implying that $\Phi(x)=F_{\tilde{g}(x)}(\Phi_1(x))$ is the desired slice.
\end{proof}
Steps 3 and 4 are exactly the same as in \cite{Gabor, Mariotipler}. We now complete step 5 and hence the proof of theorem \ref{thm:perturb}.
\begin{proof} 
If $(M',J',(L_i'))$ is a small deformation of the complex structure of $(M,J,(L_i))$, then $c_1(L_i') = c_1(L_i)$. In particular, since $\omega\in c_1(L_0)$, we can use Moser's lemma to modify $L_0'$ and $J'$ by a small diffeomorphism so that $\omega_0$ is  Hermitian with respect to $J'$, and also tamed by it. That is, we can assume without loss of generality that $J'\in \sJ$. Next, since $L_i'$ is isomorphic to $L_i$ as a smooth line bundle, $h_i$ is also hermitian metric on $L_i'$. Let $A_i'$ be the corresponding Chern connection on $L_i'$. In this way, we obtain a point $(J',A') \in \sM$ in a small neighbourhood of $(J,A)$. By Theorem \ref{kuranishi}, possibly by modifying $(J',A')$ by the action of $\sG^\CC$, we can assume that there exists a $v'\in B$ such that $\Phi(v') = (J',A')$. \\
\indent If $v'$ is polystable for the $K^\CC$ action then by steps 2 and 3, we have a zero in the $\tilde \sG^\CC$ orbit of $(J',A')$, and hence coupled cscK metrics on $(M,J',(L_i'))$. If not, applying the Hilbert-Mumford criterion to $v'$, we may conclude that there exists a one-parameter subgroup $\rho : \mathbb{C}^{*} \rightarrow K^\CC$ such that $$v_0=\displaystyle \lim_{\lambda \rightarrow 0} \rho(\lambda) v'$$ satisfies $$\nu(v_0)=0.$$ Such a $v_0$ could potentially be located outside $B$. If so, using an element of $K^\CC$ we can bring it inside the ball.  Note that since integrability is a closed condition, $v_0$ also represents an integrable point. Since $\nu(v_0)=0$, by Step 4 above, we can perturb $v_0$ within its gauge orbit to $v_0'$ such that $\mu(\Phi(v_0')) = 0$. We let $\Phi(v_0'):=(J_0,A_0)$, and $(M,J_0,(L_{i,0}))$ denote the corresponding polarized tuple. There is of course a possibility that $v_0 = 0$, in which case we have $\Phi(v_0) = (J,A)$ and hence by the hypothesis, $\mu(\Phi(v_0)) = 0$, and we simply have $(J_0,A_0) = (J,A)$. In any case, since $\mu(J_0,A_0) = 0$, by Theorem  \ref{thm:momentmapthm}, $(M,J_0,(L_{i,0}))$ admits coupled cscK metrics. In particular, its Futaki invariant also vanishes. As in \cite{Gabor}, to complete the proof we  produce a  test configuration with $(M,J_0,(L_{i,0}))$ as the central fibre. \\
\indent The test configuration is constructed as follows. The Kuranishi map $\Phi$ from Proposition \ref{firststepslice}, along with $\rho$ produces an $S^1$-equivariant map from a small disc $F : \Delta \rightarrow \mathcal{N}$ such that $F(t) \simeq (J,A_1,\ldots) \ \forall \ t\neq 0$ and $F(0) \simeq (J',A_1'\ldots)$. Let $\pi:\tilde{V} = M \times \Delta\rightarrow \Delta$,  $\mathcal{L}_i = \pi_{1}^{*} L_i \ \forall \ i\geq 1$ with the almost complex structures given by $F(t)$ for every $t$, and the holomorphic structure on $\sL_i\Big|_{\pi^{-1}(t)}$ defined by the connections $A_i'$. These structures are integrable because $F$ is holomorphic. The $S^1$-action extends to a $\mathbb{C}^{*}$-action which then lifts in a natural manner to the line bundles (and hence to $\mathcal{L}=\otimes_{i=0}^{m} \mathcal{L}_i$). This family is flat over $\Delta$ because $\mathcal{V}$ and $\Delta$ are smooth and the dimension of the fibre is a constant. Hence this is a valid test configuration in the sense of section \ref{sec:Futaki} with central fibre $(M,J_0,(L_{i,0}))$.
The above discussion shows that the Futaki invariant of $(M,J_0,(L_{i,0}))$ vanishes, and hence by K-polystability, $(M,J',(L_{i}'))$ is isomorphic to the central fibre which admits coupled cscK metrics. Pulling these metrics back by the isomorphism, we get coupled cscK metrics on $(M,J',(L_i'))$. \\
\end{proof}
\begin{appendix}
\section{Twisted Bergman kernel}

\setcounter{thm}{0}
     \renewcommand{\thethm}{\Alph{section}\arabic{thm}}

The aim of this appendix is to prove Corollary \ref{twisted expansion}. After the first draft of this paper appeared online, it was pointed to the authors by Ruadha\'i­ Dervan that the first parts of Theorem \ref{bergman} and Corollary \ref{twisted expansion}, have already been obtained by Keller in \cite{Keller} by using the usual method of ``peaked sections" of Tian \cite{Tian}.  The equivariant expansion of the Bergman kernel, as is well known, follows from a small modification of this proof, and can also probably be proved using a equivariant Riemann-Roch theorem applied to twisted line bundles (cf. \cite{Der}). For the convenience of the reader, we include an outline of the proof following  the exposition in \cite{Gaborbook}. More general results of this nature can be found in \cite{M-M}. Let $M^n$ be a K\"ahler manifold with two ample bundles $L_0$ and $L$ with Hermitian metrics $h_0 = e^{-\varphi_0}$ and $h = e^{-\varphi}$ with curvatures $F_{h_0}$, $F_h$ such that $\omega_0 = \sqrt{-1}F_{h_0}$ $\omega = \sqrt{-1}F_{h}$ are K\"ahler forms in $ 2\pi c_1(L_0)$ and $2\pi  c_1(L)$ respectively. We are interested in the space $H^0(M,L_0^k\otimes L^{-1})$ of holomorphic sections of $L^k_0\otimes L^{-1}$ with the $L^2$-inner product $$\langle s,t\rangle_{k} = \int_M\langle s,t\rangle_{h^k_0\otimes h}\dvok.$$ 

Let $w$ be a holomorphic vector field generating a $\CC^*$ action on $M$ and Hamiltonian with respect to both $\omega_0$ and $\omega$ with Hamiltonians $\theta_0$ and $\theta$. Our convention is that $\theta_0$ satisfies $$\sqrt{-1}\dbar \theta_0 = i_w\omega_0,$$ and similarly for $\theta$. The choice of the Hamiltonian $\theta_0$ is related to a (dual) action on $H^0(M,L^k_0)$ given by $$w \cdot s = \nabla^{(0)}_{-w} s - \sqrt{-1}\theta_0s,$$ where $\nabla^{(0)}$ is the Chern connection of $h_0$. Similarly $H$ induces an action on $H^0(M,L^{-1})$ and together they induce an action on $H^0(M,L_0^k\otimes L^{-1})$ which we denote by $ 2\pi \sqrt{-1}A_k$. Then $A_k$ is Hermitian. For any orthonormal basis $\{s_0,\cdots,s_{N_k}\}$, we define the {\em twisted Bergman kernel} by $$\rho_k(x) := \sum_{i=0}^{N_k}|s_i,s_i|^2_{h^k_0\otimes h}(x)$$ and the {\em equivariant twisted Bergman kernel} by $$\rho_k^{S^1}(x) := k^{-1}\sum_{i=0}^{N_k}\langle A_ks_i,s_i\rangle_{h^k_0\otimes h}(x).$$ In particular, if $\{s_{i}\}$ is an orthonormal basis of eigenvectors with eigenvalues $\{\la_i\}$, then $$\rho_k^{S^1}(x) := k^{-1}\sum_{i=0}^{N_k} \la_k|s_i|^2_{h^k_0\otimes h}(x).$$ 

\begin{thm}\label{bergman}
As $k\rightarrow \infty$ we have the following expansions 
\begin{align*}
(2\pi)^n\rho_k(x) &= 1 + \Big(\frac{S_{\omega_0}}{2} - \mathrm{tr}_{\omega_0}\omega\Big)k^{-1} + O(k^{-2})\\
(2\pi)^n\rho_k^{S^1}(x) &= -\theta_0 - \Big[\theta_0\Big(\frac{S_{\omega_0}}{2} - \mathrm{tr}_{\omega_0}\omega\Big)  -\theta \Big]k^{-1} + O(k^{-3/2}).
\end{align*}
\end{thm}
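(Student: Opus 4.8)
The plan is to establish both expansions by Tian's method of peaked sections \cite{Tian}, following the exposition in \cite{Gaborbook} and treating the fixed twist by $L^{-1}$ together with the $\CC^*$-action as perturbations of the classical argument. First I would reduce everything to a pointwise statement at a fixed $x\in M$. For the density $\rho_k$ this is immediate from the extremal characterization $\rho_k(x)=\sup\{|s(x)|^2_{h_0^k\otimes h}:\|s\|_k=1\}$, whose maximiser is a section concentrated near $x$; for the equivariant density one observes that $k\,\rho_k^{S^1}(x)$ is the restriction to the diagonal of the Schwartz kernel of $A_k$ composed with the Bergman projection, so it too is governed by sections peaked at $x$. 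Around $x$ I would pass to K\"ahler normal (Bochner) coordinates and holomorphic frames for $L_0$ and $L^{-1}$ so that $$\varphi_0(z)=|z|^2-\tfrac14\R{i}{j}{k}{l}(0)\,z^i\bar z^j z^k\bar z^l+O(|z|^5),$$ so that the induced weight on the $L^{-1}$-factor is $e^{\varphi}$ with $\varphi(z)=\varphi_{i\bar j}(0)z^i\bar z^j+O(|z|^3)$, $\sum_i\varphi_{i\bar i}(0)=\mathrm{tr}_{\omega_0}\omega$ (reflecting that $L^{-1}$ carries curvature $-\omega$), and I would Taylor-expand the Hamiltonians $\theta_0,\theta$ about $x$.

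Next I would carry out the Gaussian computation for $\rho_k$. The model peaked section is the local frame $e^{\otimes k}\otimes\sigma$, whose profile becomes a unit Gaussian after the rescaling $w=\sqrt{k}\,z$. Its pointwise norm at $x$ is $1$, while its $L^2$-norm is obtained by expanding $e^{-k\varphi_0}\cdot e^{\varphi}\cdot\det(g_{i\bar j})$ against the volume $\tfrac{(k\omega_0)^n}{n!}$ and evaluating the resulting Gaussian moments: the quartic term of $\varphi_0$ and the Ricci term from $\det(g_{i\bar j})$ combine, upon inverting the norm, to the coefficient $\tfrac12 S_{\omega_0}$, and the quadratic part of the twist weight contributes $-\mathrm{tr}_{\omega_0}\omega$. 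To promote this heuristic to a theorem I would correct the approximate section to an honest holomorphic section by the H\"ormander $L^2$-estimate for $\dbar$ on $L_0^k\otimes L^{-1}$, whose curvature is positive for $k$ large; a standard argument then shows the model sections asymptotically realise the extremal problem with an error of size $O(k^{-2})$, yielding the first expansion.

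For the second expansion I would apply the Hermitian generator $A_k$ to the same peaked sections. The action on $L_0^k\otimes L^{-1}$ is the sum of the $L_0^k$-action, whose Hamiltonian is $k\theta_0$, and the fixed $L^{-1}$-action, whose Hamiltonian is $\theta$; since only $L_0$ carries the power $k$, the former dominates and, after the normalising factor $k^{-1}$, produces the leading term $-\theta_0(x)$, whereas $\theta$ is suppressed by one power of $k$ and surfaces only at the next order. The remaining covariant-derivative part $\nabla^{(0)}_{-w}$ of the generator is first order and contributes only at lower order on a section peaked at $x$. Assembling the subleading coefficient then amounts to combining two contributions: the density correction $\big(\tfrac12 S_{\omega_0}-\mathrm{tr}_{\omega_0}\omega\big)$ already found for $\rho_k$, now weighted by the leading Hamiltonian $-\theta_0$, together with the genuinely new twist contribution $+\theta$, which give $-\big[\theta_0(\tfrac12 S_{\omega_0}-\mathrm{tr}_{\omega_0}\omega)-\theta\big]k^{-1}$.

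I expect the equivariant subleading term to be the main obstacle. The Taylor expansion of $\theta_0$ about $x$ feeds linear (odd) terms into the Gaussian integrals, and the interaction of the derivative part $\nabla^{(0)}_{-w}$ with the non-holomorphic corrections of the model section produces terms a priori of order $k^{-1/2}$; the crux is to show that these odd contributions integrate to zero against the even Gaussian profile and the leading curvature corrections, so that the first genuine correction to $-\theta_0$ occurs at order $k^{-1}$ and the remainder is only $O(k^{-3/2})$ rather than $O(k^{-2})$. A secondary technical point is to make the peaked-section construction and the $\dbar$-correction uniform in $x$ while preserving equivariance under the $\CC^*$-action, so that the expansions hold with the stated uniform remainders; both issues are handled by the refinements of Tian's construction recorded in \cite{Gaborbook} (see also \cite{M-M} for the general theory).
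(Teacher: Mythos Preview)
Your proposal is correct and follows essentially the same approach as the paper: both use Tian's peaked-section method in the form presented in \cite{Gaborbook}, set up K\"ahler normal coordinates with the twist $e^{\varphi}$ as an additional quadratic weight, correct the model section to a holomorphic one via the H\"ormander/$\Delta_{\dbar}$ estimate (using that $k\omega_0-\omega$ is positive for large $k$), and then obtain the equivariant expansion by applying $k^{-1}A_k$ directly to the peak section, where the covariant-derivative part is $O(k^{-3/2})$ and the multiplicative part gives $-\theta_0+k^{-1}\theta$. Your extremal/Schwartz-kernel framing of $\rho_k$ and $\rho_k^{S^1}$ is equivalent to the paper's use of the single section $\eta$ orthogonal to sections vanishing at $x$, and your remarks on odd Gaussian moments are exactly what accounts for the $O(k^{-3/2})$ remainder the paper records.
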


From general considerations the error in the second line should be $O(k^{-2})$, but our proof yields this slightly weaker result which is enough for our purposes. 
\begin{cor}\label{twisted expansion} With notation as above, we have 
\begin{align*}
(2\pi)^nd_{t,k} = a_{t,0}k^{n} + a_{t,1}k^{n-1} + O(k^{n-2})\\
(2\pi)^n\mathrm{Tr}(A_k) = b_{t,0}k^{n+1} + b_{t,1}k^{n} + O(k^{n-1}), 
\end{align*} where 
\begin{align*}
a_{t,0} &= \int_M\dvo,~ a_{t,1} = \int_M\Big(\frac{S_{\omega_0}}{2} - \mathrm{tr}_{\omega_0}\omega\Big)\dvo\\
b_{t,0} &= -\int_M\theta_0\dvo,~ b_{t,1} = -\frac{1}{2}\int_M\theta_0S_{\omega_0}\dvo + \int_M[\theta_0\mathrm{tr}_{\omega_0}\omega + \theta]\dvo.
\end{align*}
\end{cor}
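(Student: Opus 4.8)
The plan is to deduce both expansions directly from the pointwise asymptotics in Theorem \ref{bergman} by integrating over $M$. The starting point is the elementary observation that integrating a Bergman kernel against its reference volume form collapses the sum over an orthonormal basis into a dimension or a trace. Since $\{s_0,\dots,s_{N_k}\}$ is orthonormal for $\langle\cdot,\cdot\rangle_k$, one has $\int_M \rho_k(x)\,\dvok = \sum_{i=0}^{N_k}\langle s_i,s_i\rangle_k = d_{t,k}$, and, working in an eigenbasis with eigenvalues $\lambda_i$, $\int_M \rho_k^{S^1}(x)\,\dvok = k^{-1}\sum_{i=0}^{N_k}\lambda_i = k^{-1}\mathrm{Tr}(A_k)$. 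Thus $d_{t,k}=\int_M\rho_k\,\dvok$ and $\mathrm{Tr}(A_k)=k\int_M\rho_k^{S^1}\,\dvok$, so everything reduces to integrating the two expansions.

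Next I would factor the power of $k$ out of the volume form using $\dvok = k^n\,\dvo$, so that $(2\pi)^n d_{t,k} = k^n\int_M (2\pi)^n\rho_k\,\dvo$ and $(2\pi)^n\mathrm{Tr}(A_k) = k^{n+1}\int_M (2\pi)^n\rho_k^{S^1}\,\dvo$. Substituting the two expansions of Theorem \ref{bergman} and matching powers of $k$ reads off all four coefficients. The leading and subleading terms of $(2\pi)^n\rho_k = 1 + (\tfrac{S_{\omega_0}}{2}-\mathrm{tr}_{\omega_0}\omega)k^{-1}+O(k^{-2})$ integrate to $a_{t,0}=\int_M\dvo$ and $a_{t,1}=\int_M(\tfrac{S_{\omega_0}}{2}-\mathrm{tr}_{\omega_0}\omega)\,\dvo$. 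Likewise, expanding $(2\pi)^n\rho_k^{S^1}$ and distributing $\theta_0(\tfrac{S_{\omega_0}}{2}-\mathrm{tr}_{\omega_0}\omega) = \tfrac12\theta_0 S_{\omega_0}-\theta_0\,\mathrm{tr}_{\omega_0}\omega$ gives $b_{t,0}=-\int_M\theta_0\,\dvo$ and $b_{t,1}=-\tfrac12\int_M\theta_0 S_{\omega_0}\,\dvo+\int_M[\theta_0\,\mathrm{tr}_{\omega_0}\omega+\theta]\,\dvo$, precisely the claimed formulas.

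The only genuine issue is the bookkeeping of the error terms, and I would address it by noting that the asymptotic expansions of Theorem \ref{bergman} hold uniformly on the compact manifold $M$; hence the pointwise remainders may be integrated term by term against the fixed smooth form $\dvo$, and each error contributes at the stated order times the finite volume. For the dimension, the pointwise $O(k^{-2})$ remainder integrates to $O(k^{n-2})$, exactly as claimed. For the trace, the pointwise $O(k^{-3/2})$ remainder furnished by the proof integrates only to $O(k^{n-1/2})$; this is still $o(k^n)$ and therefore does not affect the identification of $b_{t,0}$ and $b_{t,1}$, while the sharper $O(k^{n-1})$ stated in the corollary follows from the optimal $O(k^{-2})$ pointwise bound mentioned in the remark after Theorem \ref{bergman}. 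In short, once the two integral identities are in place, the corollary is a matter of integrating a uniform asymptotic expansion, and no analytic input beyond Theorem \ref{bergman} is required.
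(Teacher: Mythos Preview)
Your proof is correct and is exactly the argument the paper has in mind: the corollary is stated without proof precisely because it follows from Theorem~\ref{bergman} by integrating the pointwise expansions of $\rho_k$ and $\rho_k^{S^1}$ against $\dvok=k^n\dvo$, using that $\int_M\rho_k\,\dvok=d_{t,k}$ and $\int_M\rho_k^{S^1}\,\dvok=k^{-1}\mathrm{Tr}(A_k)$. Your remark about the error term is also on point: the $O(k^{-3/2})$ from the theorem only yields $O(k^{n-1/2})$ after integration, and the stated $O(k^{n-1})$ indeed relies on the sharper $O(k^{-2})$ bound alluded to after Theorem~\ref{bergman} (or alternatively on the a priori polynomiality of the weights noted in Remark~\ref{Ruis rem}).
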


\begin{proof}[Proof of Theorem \ref{bergman}] The required expansion is obtained using the  ``peak" sections method of Tian \cite{Tian}, and we first recall the relevant parts of this technique following the exposition in \cite{Gaborbook}. Fixing $x\in M$, the main idea is to construct a  holomorphic section $\eta$ of $L_0^k\otimes L^{-1}$ such that $\|\eta||_{L^2} = 1$ and $\eta$ is almost orthogonal (with an error of at most $O(k^{-2})$) to all holomorphic sections vanishing at $x$. It is easy to see that 
\begin{equation}\label{berg eq}
\rho_k(x) = \frac{|\eta(x)|^2_{h^k\otimes h}}{||\eta||^2_{k}},~\rho_k^{S^1}(x) = \frac{\langle k^{-1}A_k\eta,\eta\rangle_{h^k\otimes h}(x)}{||\eta||^2_{k}},
\end{equation} and so the theorem would follow from an expansion of $|\eta|^2(x)$ and $k^{-1}A_k$. Throughout we denote $\vep(k)$ to be any error term that is $O(k^{-N})$ for all $N$.

Suppose there exist normal coordinates $(w^1,\cdots,w^n)$ for $\omega_0$ on the unit ball $B = \{w\in \CC^n~|~|w|<1\}$ such that $\omega_0 = \ddbar\varphi_0$ where $$\varphi_0(w) = |w|^2 - \frac{1}{4}R_{i\bar j k \bar l}w^{i}\bar w^j w^k\bar w^l + Q_0(w) + P_0(w),$$ where $Q_0$ is a quintic polynomial, $|P_0(w)| = O(|w|^6)$ and $R_{i\bar j k\bar l}$ denotes the curvature of $ \omega_0$. Also we can choose the coordinates so as to diagonalize $\omega$, so that $2\pi\omega = \ddbar\varphi$, where $$\varphi(w) = \sum_i\la_i|w^i|^2 + Q(w) + P(w),$$ where $Q(w)$ is a cubic polynomial and $|P(w)| = O(|w|^4).$  Note that the factor of $2\pi$ in front means that $\Lambda_{\omega_0}\omega(x) = \sum_i\la_i$. It is convenient to rescale the coordinates $z^i = \sqrt{k}w^i$, so that ball now becomes $B = \{|z| \leq \sqrt{k}\}$ and $\Phi_0(z) = k\varphi_0(z)$ is then given by $$\Phi_0(z) = |z|^2 - \frac{k^{-1}}{4}R_{i\bar j k \bar l}z^{i}\bar z^j z^k\bar z^l + k^{-3/2}Q_0(z) + kP_0(k^{-1/2}z).$$ 
Note that $e^{-\Phi}$ is then the metric $h^k$. The aim is to take an ``almost holomorphic section" $\sigma_0$ such that $$|\sigma_0|_{h^k\otimes h^{-1}} = e^{-\Phi_0 + \varphi}$$ on $\{|z|< k^{-1/5}\}$, and $||\dbar \sigma_0||_{k} = \vep(k)$, and perturb it to a genuine holomorphic section $\sigma$ for $k>>1$. This relies on the invertibility of the Laplacian $\Delta_{\dbar} = \dbar^*\dbar + \dbar\dbar^*$ on $L^k$ valued $(0,1)$ forms, where the adjoint is computed using the $L^2$-inner product above. By the Weitzenbock formula $$ \Delta_{\dbar}= \overline{\nabla}^*\overline\nabla + \mathrm{Ric}_{k\omega_0} + \frac{\sqrt{-1}}{n}\Lambda_{k\omega_0}F_{h_0^k\otimes h^{-1}}.$$ Now for $k>>1$, we have $\mathrm{Ric}_{k\omega_0} \geq -\frac{1}{4}(k\omega_0)$. In the usual case, the curvature term $F$ is simply identity, and so the Laplacian is lower bounded by $1/2$, say. In our case,  since $\sqrt{-1}F_{h^k_0\otimes h^{-1}} = k\omega_0- \omega$, we have an extra term $\Lambda_{k\omega_0}\omega$ which goes to zero as $k\rightarrow \infty$, and so we also have $$\Delta_{\dbar} \geq \frac{1}{2}$$ for $k>>1$. As in the standard case we can then take $\sigma = \sigma_0 - \dbar^*\Delta_{\dbar}^{-1}\dbar \sigma_0$ to be the required holomorphic section. And just as in the standard case, we also have $|\sigma(x)|_{h^k_0\otimes h^{-1}} = 1 + \vep(k)$, $||\sigma - \sigma_0||_k = \vep(k),$ and that for every holomorphic section $\tau$  vanishing at $x$, $$|\langle \tau,\sigma\rangle_k| \leq Ck^{-1}||\tau||_k.$$ We now claim that 
\begin{equation}\label{L2 of section}
||\sigma||_k^2 = (2\pi)^n\Big[1 - \Big(\frac{S_{\omega_0}(x)}{2} - \Lambda_{\omega_0}\omega(x)\Big)k^{-1} + O(k^{-2})\Big].
\end{equation}
Up to an error of $\vep(k)$ it is enough to compute $L^2$ norm of $\sigma_0$ on $\{|z|<k^{1/5}\}$, which up to an $\vep(k)$ is the integral $$\int_{\CC^n}e^{-\Phi_0(z) + \varphi(z)}\frac{(\ddbar\Phi_0(z))^n}{n!}.$$ We have the expansions 
\begin{align}
e^{-\Phi_0(z) + \varphi(z)} = e^{-\sum_i\Big(1-\frac{\la_i}{k}\Big)|z^i|^2}\Big(1+ Q(k^{-1/2}z) + P(k^{-1/2}z) + \frac{k^{-1}}{4}R_{i\bar j p\bar q}z^iz^{\bar j}z^{p}z^{\bar q} \\- k^{-3/2}Q_0(z) - kP_0(k^{-1/2}z)\Big),\nonumber\\
\frac{(\ddbar\Phi_0(z))^n}{n!} = \Big(1 - k^{-1}R_{p\bar q}z^pz^{\bar q} + k^{-3/2}q_0(z) + O(k^{-2}|z|^4)\Big)\,dV,
\end{align}
where $dV$ is $2^n$ times the Euclidean volume, that is,$$dV = (\sqrt{-1})^ndz^1\wedge d\bar z^1 \wedge\cdots\wedge dz^n\wedge d\bar z^n.$$ 
The leading order term in the expansion is given by 
\begin{align*}
\int_{\CC^n}e^{-\sum_i\Big(1-\frac{\la_i}{k}\Big)|z^i|^2}\,dV &= \frac{(2\pi)^n}{\Pi_{i}\Big(1-\frac{\la_i}{k}\Big)} \\
&=(2\pi)^n\Big[1 + (\Lambda_{\omega_0}\omega) k^{-1} + O(k^{-2})\Big].
\end{align*}
As in the standard case, there is a contribution of order $O(k^{-1})$ by the terms $$\int_{\CC^n}e^{-|z|^2}\Big(\frac{1}{4}R_{i\bar j p\bar q}z^iz^{\bar j}z^{p}z^{\bar q} -R_{p\bar q}z^pz^{\bar q} \Big)\,dV = -(2\pi)^n\frac{S_{\omega_0}(x)}{2} .$$ The only ``new" terms we need to worry about are the ones involving $Q(k^{-1/2}z)$ and $P(k^{-1/2}z)$. Now $Q$ is a cubic polynomial, and hence by symmetry $$\int_{\CC^n} e^{-\sum_i\Big(1-\frac{\la_i}{k}\Big)|z^i|^2}Q(k^{-1/2}z)\,dV = 0.$$ On the other hand, the leading order term in $P(k^{-1/2}z)$ is $k^{-2}p_4(z)$, where $p_4$ is a fourth degree polynomial and hence it follows that $$\int_{\CC^n} e^{-\sum_i\Big(1-\frac{\la_i}{k}\Big)|z^i|^2}P(k^{-1/2}z)\,dV  = O(k^{-2}).$$ This completes the proof of \eqref{L2 of section}. As in the standard case, it follows that if $E\subset H^0(M,L_0^k\otimes L^{-1})$ is the co-dimension one subspace of sections vanishing at $x$, and $\eta$ is the projection of $\sigma$ to the orthogonal complement, then $|\eta(x)|^2_{h^k_0\otimes h^{-1}} = 1+\vep(k)$ and 
\begin{equation}\label{L2 of final section}
||\eta||_k^2 = (2\pi)^n\Big[1 - \Big(\frac{S_{\omega_0}(x)}{2} - \Lambda_{\omega_0}\omega(x)\Big)k^{-1} + O(k^{-2})\Big].
\end{equation}
Combined with \eqref{berg eq}, this gives the expansion of $\rho_k$. It now remains to analyze $k^{-1}A_k$. As in \cite[pg.\ 139-140]{Gaborbook}, if we denote the Chern connection on $L_0^k\otimes L$ by $\tilde \nabla $, then $$(k^{-1}\tilde \nabla_w)\eta(x) = O(k^{-3/2}).$$ The action of $k^{-1}A_k$ is given by 
\begin{align*}
k^{-1}A_k\cdot \eta &= \frac{1}{k\sqrt{-1}}\tilde \nabla_{-w} - \theta_0\eta + k^{-1}\theta\eta \\
&=- \theta_0\eta + k^{-1}\theta\eta + O(k^{-3/2}).
\end{align*} 
Combining this with \eqref{berg eq} and \eqref{L2 of final section}, and recalling that $|\eta(x)|_{h_0^k\otimes h^{-1}}^2 = 1+\vep(k)$, we obtain
\begin{align*}
(2\pi)^n\rho_k^{S^1}(x) &= \frac{(-\theta_0 + k^{-1}\theta + O(k^{-3/2}))(1+\vep(k))}{1 -  \Big(\frac{S_{\omega_0}(x)}{2} - \Lambda_{\omega_0}\omega(x)\Big)k^{-1} + O(k^{-2})}\\
&=\Big(-\theta_0 + k^{-1}\theta\Big) \Big(1 +  \Big(\frac{S_{\omega_0}(x)}{2} - \Lambda_{\omega_0}\omega(x)\Big)k^{-1} + O(k^{-2})\Big) + O(k^{-3/2})\\
&=-\theta_0 - \Big(\theta_0\Big(\frac{S_{\omega_0}(x)}{2} - \Lambda_{\omega_0}\omega(x)\Big) - \theta\Big)k^{-1} + O(k^{-3/2}).
\end{align*}
\end{proof}

\end{appendix}


\end{document}